\def\theequation{\@arabic{\c@section}.\@arabic{\c@equation}}
\newtheorem{theorem}{Theorem}[section]
\newtheorem{definition}{Definition}[section]
\newtheorem{remark}{Remark}[section]
\newtheorem{proposition}{Proposition}[section]
\newtheorem{corollary}{Corollary}[section]
\newtheorem{lemma}{Lemma}[section]
\newtheorem{example}{Example}[section]
\def\theequation{\@arabic{\c@section}.\@arabic{\c@equation}}
\theoremstyle{remark}
\newtheorem{case}{Case}
\title{Box dimension of fractal functions on attractors}
\begin{document}
	\date{}
	\maketitle
	
	

	\begin{center}
	 R. Pasupathi\\
Sobolev Institute of Mathematics SB RAS, 630090, Novosibirsk, Russia \\
	 Email: pasupathi4074@gmail.com
	\end{center}

	\begin{abstract}
	We study a wide class of fractal interpolation functions in a single platform by considering the  domains of these functions as general attractors. We obtain lower and upper bounds of the box dimension of these functions in a more general setup where the interpolation points need not be equally spaced, the scale vectors can be variables and the maps in the corresponding IFS can be non-affine. In particular, we obtain the exact value of the box dimension of non-affine fractal functions on general m-dimensional cubes and Sierpi\'nski Gasket.
	\end{abstract}
	
	{\bf{Keywords:}} Fractal interpolation function, Attractor,  Box dimension, Sierpi\'nski Gasket \\
	
	{\bf{MSC Classification:}} {28A80, 41A05, 37E05}
\section{Introduction}\label{sec1}
In 1986, Barnsley \cite{b1} proposed the concept of a fractal interpolation function (FIF) on intervals using the notion of iterated function system. This work was extended to many different domains such as triangles \cite{Ma1}, $m$-dimensional cubes \cite{fis-13}, Sierpi\'nski Gasket \cite{DSO}, post critically finite (p.c.f.) self-similar sets  \cite{rua} etc. 

The graph of a function, and its box and Hausdorff dimensions, have been of qualitative interest for many researchers since the past few decades. Several theories concerning the box dimension of fractal functions have been explored in the literature. Some of recent works on FIF and dimension theory can be found in \cite{cvvt,KUM,jcns,nava,PRS}.

Hardin and Massopust \cite{di1} have estimated the value of the box dimension of graph of FIF on an interval when the maps in the corresponding IFS are affine and the interpolation points are equally spaced. Barnsley and Massopust \cite{bm} studied the box dimension of bilinear FIF on an interval in the case of equally spaced data points. Nasim et al. \cite{agn} obtained the box dimension of  non-affine FIF  in the case of equally spaced data points  by using H\"{o}lder exponent. Feng and Sun \cite{fesu} studied the box dimension of FIFs on a rectangle derived from affine FIFs on an interval with arbitrary interpolation nodes. Geronimo and Hardin \cite{GEHA} have estimated the box dimension of self-affine FIF  on   
polygonal regions. Bouboulis et al. \cite{bdd}
introduced recurrent FIF (RFIF) on a rectangle as the invariant set of the recurrent IFS (RIFS) in order to gain more flexibility, and studied the  value of the box dimension of RFIF when the maps in the corresponding RIFS are affine with respect to each variable and the interpolation points are equally spaced. Bouboulis and Dalla \cite{bd} generalized the theory of RFIF and its box dimension to higher dimensions.

In the literature, we found that, in most of the cases, the authors assumed that the interpolation points are equally spaced and the maps in the corresponding IFS are affine when the box dimension of  FIF was considered, and  there  was no discussion about the box dimension of FIFs on  p.c.f. self-similar sets except the Sierpi\'{n}ski Gasket. Even in the case of Sierpi\'{n}ski Gasket also, the authors \cite{bdim} obtained only a non-trivial  upper bound of the box dimension of FIFs by using H\"{o}lder exponent. The authors always assumed all the scale variables to be constants for estimating a non-trivial lower bound of the box dimension of fractal functions.

In this paper, we consider FIFs on attractors  and we study  lower and upper bounds of the box dimension of these functions without assuming that the interpolation points are equally spaced, the maps in the corresponding IFSs are affine and all the scale vectors are constants. We derive upper bounds of the box dimension of fractal functions using a function space called the oscillation space, which contains the collection of H\"{o}lder continuous functions. We provide non-trivial lower bounds of the box dimension of FIF on $m$-dimensional cubes and Sierpi\'{n}ski Gasket.
\section{Preliminaries}
\begin{definition}
	An \textit{iterated function system (IFS)} consists of a complete metric space
	$(X,d)$ together with a finite set of continuous mappings $f_i : X \to X, i = 1, 2, \dots , N$. We denote it as $\mathcal{S}=\{(X,d), (f_i)_{i=1}^N\}$.
\end{definition}
For an IFS, we can define  an operator (called Hutchinson operator) $\mathcal{F}_\mathcal{S}:(\mathcal{H}(X),h)\to (\mathcal{H}(X),h)$ by
$$\mathcal{F}_\mathcal{S}(B)=\underset{i=1}{\overset{N}{\cup}}f_i(B)\;\;\; \text{for}\; B\in \mathcal{H}(X),$$
where $(\mathcal{H}(X),h)$ is the  Hausdorff metric space on $X$ i.e., $\mathcal{H}(X)$ is the collection of all non-empty compact subsets of $X$ and $h$ is the  Hausdorff distance.\\
If $\mathcal{F}_\mathcal{S}$ has a unique fixed point $A_\mathcal{S}$ (say) and 	$\underset{n \to \infty}{\lim}\mathcal{F}_\mathcal{S}^{[n]}(B) = A_\mathcal{S}$ for every $B\in \mathcal{H}(X)$, then $A_\mathcal{S}$ is called the \textit{attractor}  of the IFS, where by $f^{[n]}$, we mean the composition of a function $f$ with itself $n$ times.
\begin{definition}
	Let $(X, d)$ be  a metric space and $f : X \rightarrow X$ be a function. If there is a constant $ r \in [0,1)$ such that:
	\begin{itemize}
		\item[$-$]  
		$$	d(f(x), f(y)) \leq r ~ d(x,y)~~\text{for}~ x, y \in X,$$
		then $f$ is called contraction;
		\item[$-$] 	$$	d(f(x), f(y)) = r ~ d(x,y)~~\text{for}~ x, y \in X,$$ 	then $f$ is called similarity.
	\end{itemize}
	The constant $r$ is called the contractivity factor of $f$.
\end{definition}
\begin{remark}\cite{HUT}\label{1thm1}
	An IFS $\{(X,d), (f_i)_{i=1}^N\}$ has an attractor 
	provided that $f_i$'s are contractions.
\end{remark}
\begin{remark}
	If a function $f:\mathbb{R}^k\to \mathbb{R}^k$ is similarity, then it is an affine map.
\end{remark}
For an IFS $\mathcal{S}=\{(X,d), (f_i)_{i=1}^N\}$, we denote $\mathcal{N}=\{1,2,\dots,N\}$, $\mathcal{N}^k=\left\{(\omega_1,\omega_2,\dots ,\omega_k):\omega_i\in \mathcal{N}\right\}$ and $\mathcal{N}^{\infty}=\left\{(\omega_i)_{i=1}^{\infty} :\omega_i\in \mathcal{N}\right\}$, and we define $$f_{\omega}=f_{\omega_1}\circ f_{\omega_2}\circ\dots \circ f_{\omega_k}\;\;\;\text{for}\;\omega=(\omega_1,\omega_2,\dots ,\omega_k)\in \mathcal{N}^k,k\in \mathbb{N}.$$
Let $\pi:\mathcal{N}^{\infty}\to A_\mathcal{S}$ be defined by $$\pi(\omega)=\underset{k\in \mathbb{N}}{\bigcap}f_{\omega_1\omega_2\cdots \omega_k}(A_\mathcal{S})~~~~\text{for}~~\omega \in \mathcal{N}^{\infty}.$$
Following Kigami \cite{KI}, we define the \textit{critical set} $\mathcal{C}$ and the \textit{post critical set}  $\mathcal{P}$ of $A_\mathcal{S}$ by
$$\mathcal{C}=\pi^{-1}\left(\underset{\substack{i,j\in \mathcal{N}\\i\neq j}}{\bigcup}\left(f_i(A_\mathcal{S})\cap f_j(A_\mathcal{S})\right)\right)~~~\text{and}~~~\mathcal{P}=\underset{k\in \mathbb{N}}{\bigcup}\tau^{[k]}(\mathcal{C}),$$
where $\tau$ is the left shift operator on $\mathcal{N}^{\infty}$.  If $\mathcal{P}$ is a finite set, then 
$A_\mathcal{S}$ is called a \textit{post critical finite (p.c.f.) self-similar set}. The \textit{boundary} of $A_\mathcal{S}$ is defined by $V_0^*=\pi(\mathcal{P})$ and we define
$$V_k=\underset{\omega \in \mathcal{N}^k}{\bigcup}f_{\omega}(V_0^*)~~~\text{for}~k\in \mathbb{N}.$$
\begin{remark}\label{131re1}\cite{KI}
	\begin{itemize}
		\item[(i)] $V_0^*\subseteq V_1\subseteq V_2\subseteq \cdots V_{k-1} \subseteq V_k\subseteq \cdots$.
		\item[(ii)] 	For $\omega,\omega' \in \mathcal{N}^k,k\in \mathbb{N}$ with $\omega \neq \omega'$, we get
		$$~~~~f_{\omega}(A_\mathcal{S})\cap f_{\omega'}(A_\mathcal{S})=f_{\omega}(V_0^*)\cap f_{\omega'}(V_0^*). $$
	\end{itemize}
	
\end{remark}	
\begin{definition}
	For a nonempty bounded subset $F$ of $\mathbb{R}^k$, the lower and upper box dimensions are defined by
	$$\underline{\dim}_B(F)=\liminf\limits_{\delta \to 0^+}\frac{\log {N}_{\delta}(F) }{\log \left(\frac{1}{\delta}\right)}~~~~~\text{and}~~~~~~\overline{\dim}_B(F)=\limsup\limits_{\delta \to 0^+}\frac{\log {N}_{\delta}(F) }{\log \left(\frac{1}{\delta}\right)},$$
	where	${N}_{\delta}(F)$ is the  minimum number of boxes with side length $\delta$ and sides parallel to the axes, whose	union contains  $F$.
	If $\underline{\dim}_B(F)=\overline{\dim}_B(F)$, this common value is denoted by $\dim_B(F)$ and is called the  \textit{box dimension} or \textit{Minkowski dimension} of $F$.
\end{definition}
\begin{remark}\cite{FAL}
	For a nonempty bounded 	subset $F$ of $\mathbb{R}^k$ and a continuous function $f : F \to \mathbb{R}$, we have  the following inequalities
	$$\dim_H(F)\leq \underline{\dim}_B(F)\leq \overline{\dim}_B(F)\;\;\text{and}\;\;\dim_H(F)\leq \dim_H(G_f),$$
	where $\dim_H(F)$ means the Hausdorff dimension of $F$.
\end{remark}
\begin{definition}
	Let $(X, d_X)$ and $(Y,d_Y)$ be  metric spaces. A function $f:X\to Y$ is called  H\"{o}lder continuous with exponent $\eta$ if  $\eta\in (0,1]$ and there exists $H\in [0,\infty)$ such that
	$$d_Y(f(x),f(x'))\leq H ~d_X(x,x')^{\eta}~~\text{for}~x,x'\in X.$$
\end{definition}
\section{Fractal interpolation function}\label{2401s3}
Let $p:V:=\underset{i\in \mathcal{N}}{\bigcup}l_i(V_0)\to \mathbb{R}$ be a given  function (data), where   
\begin{itemize}
	\item[$-$]  $V_0=\{k_1,k_2,\dots,k_r\}\subseteq V\subseteq K$,
	\item[$-$] $K$ is an attractor of an IFS  $\{(\mathbb{R}^m,\|.\|_2),(l_i)_{i\in \mathcal{N}}\}$, $\|.\|_2$ is the Euclidean metric on $\mathbb{R}^m$, $\mathcal{N}=\{1,2,\dots,N\}$ and  $l_i$ is a similarity map on $\mathbb{R}^m$ with the contractivity factor $r_i$ for $i\in \mathcal{N}$.
\end{itemize}  
Let us consider $g_i:K \times \mathbb{R}\to \mathbb{R}$ defined by
\begin{equation}\label{0702e1}
	g_i(x,z)=s_{i}(x)z+q_{i}(x)\;\;\;\text{for}\; (x,z)\in K \times \mathbb{R}, i\in \mathcal{N},
\end{equation}
where $s_{i}$ is a continuous function  with $\|s\|_{\infty}:=\max\{\|s_{i}\|_{\infty}:~i\in \mathcal{N}\}<1$, $\|.\|_{\infty}$ is the uniform metric and $q_{i}$ is a continuous function which satisfy the following `join-up' conditions
\begin{equation}\label{1324e2}
	q_i(k_j)=p(l_{i}(k_j))-s_{i}(k_j)p(k_j)\;\;\;\;\;\text{for}\;j\in \{1,2,\dots,r\}.
\end{equation}
We consider the IFS $\mathcal{S}=\left\{(K\times  \mathbb{R},\|.\|_2),(f_{i})_{i\in \mathcal{N}}\right\}$, where
$$f_{i}(x,z)=(l_{i}(x),g_{i}(x,z))~\;\text{for}~(x,z)\in K\times  \mathbb{R}.$$
Let us suppose that  the  map $T:\mathcal{C}^*\to \mathcal{C}$ given by
$$T(f)(x)=g_{i}(l_{i}^{-1}(x),f(l_{i}^{-1}(x))~\;\;\text{for}~x\in l_i(K),i\in \mathcal{N},f\in \mathcal{C}^*$$ is well-defined, where   $\mathcal{C}=\left\{f:K\to \mathbb{R}~:~f ~\text{is continuous} \right\}$ and $\mathcal{C}^*=\\\left\{f\in  \mathcal{C}~: f|_{V_0}=p|_{V_0} \right\}$.
\begin{lemma}\label{1706r1}
	$$T(\mathcal{C}^{*})\subseteq \mathcal{C}^{**}\subseteq \mathcal{C}^{*},$$
	where  $\mathcal{C}^{**}=\left\{f\in \mathcal{C} ~: f|_{V}=p|_{V} \right\}$.
\end{lemma}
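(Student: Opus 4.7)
The plan is to establish the two inclusions separately. The inclusion $\mathcal{C}^{**}\subseteq \mathcal{C}^{*}$ is immediate from $V_0\subseteq V$: any continuous function on $K$ that coincides with $p$ on the larger set $V$ automatically coincides with $p$ on the subset $V_0$, so nothing further needs to be said.

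The real content of the lemma is the inclusion $T(\mathcal{C}^{*})\subseteq \mathcal{C}^{**}$. Fix $f\in \mathcal{C}^{*}$. Because $T$ is hypothesised to be a well-defined map with codomain $\mathcal{C}$, the function $T(f)$ is already continuous on $K$, and the only thing left to check is the boundary-data equality $T(f)|_V=p|_V$. (Were well-definedness not granted, continuity would have to be proved by hand by gluing the continuous pieces $x\mapsto g_i(l_i^{-1}(x),f(l_i^{-1}(x)))$ on the finite closed cover $K=\bigcup_{i\in \mathcal{N}}l_i(K)$, with agreement on the overlaps $l_i(K)\cap l_j(K)$ being the delicate point; but the hypothesis absorbs this.)

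For the boundary-data identity, I would take an arbitrary $v\in V=\bigcup_{i\in \mathcal{N}}l_i(V_0)$ and write $v=l_i(k_j)$ for some $i\in \mathcal{N}$ and $j\in \{1,\dots,r\}$. Since $l_i$ is a similarity, hence injective, we get $l_i^{-1}(v)=k_j\in V_0$; and since $f\in \mathcal{C}^{*}$, we have $f(k_j)=p(k_j)$. A direct substitution into \eqref{0702e1} followed by the join-up identity \eqref{1324e2} then gives
\begin{equation*}
T(f)(v)=g_i\bigl(k_j,f(k_j)\bigr)=s_i(k_j)p(k_j)+q_i(k_j)=p(l_i(k_j))=p(v),
\end{equation*}
which is the desired equality.

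I do not expect any genuine obstacle: the lemma is an unwinding of the definitions coupled with \eqref{1324e2}. The only mild subtlety is that a given $v\in V$ may admit several representations $v=l_i(k_j)$, but since the resulting value $p(v)$ depends only on $v$ and well-definedness of $T$ is built into the hypothesis, the choice of representation does not matter.
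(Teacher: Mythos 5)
Your proposal is correct and follows essentially the same route as the paper: the inclusion $\mathcal{C}^{**}\subseteq \mathcal{C}^{*}$ is immediate from $V_0\subseteq V$, and the main inclusion reduces to the computation $T(f)(l_i(k_j))=g_i(k_j,p(k_j))=p(l_i(k_j))$ via the join-up conditions, with continuity of $T(f)$ supplied by the assumed well-definedness of $T$. The paper's proof is exactly this one-line verification, so there is nothing to add.
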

\begin{proof}
	Since 
	\begin{equation}\label{11324e1}
		T(f)(l_{i}(k_j))	=g_{i}(k_j,f(k_j))=g_{i}(k_j,p(k_j))\overset{(\ref{0702e1})\&(\ref{1324e2})}{=}p(l_{i}(k_j)),
	\end{equation} 
	for $j\in \{1,2,\dots,r\},i\in \mathcal{N}$ and $f\in \mathcal{C}^*$, we get the proof.
\end{proof}
\begin{theorem}\label{306t1}
	$T$ is a contraction map on the complete  space $(\mathcal{C}^{*},\|.\|_{\infty})$.
\end{theorem}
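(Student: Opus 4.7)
The plan is to verify two separate claims: that $(\mathcal{C}^*, \|\cdot\|_\infty)$ is complete, and that $T$ is strictly contractive on it with Lipschitz constant $\|s\|_\infty$.

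For completeness, I would recall that $K$ is compact (being an attractor), so $(\mathcal{C}, \|\cdot\|_\infty)$ is a Banach space by the classical theorem on uniform convergence of continuous functions. It then suffices to exhibit $\mathcal{C}^*$ as a closed subset: if $(f_n)\subset \mathcal{C}^*$ converges uniformly to $f\in\mathcal{C}$, then for each $k_j\in V_0$ we have $f(k_j)=\lim_n f_n(k_j)=p(k_j)$, so $f|_{V_0}=p|_{V_0}$ and $f\in \mathcal{C}^*$. Closed subsets of complete metric spaces are complete, giving the completeness of $(\mathcal{C}^*,\|\cdot\|_\infty)$. The fact that $T$ sends $\mathcal{C}^*$ into $\mathcal{C}^*$ is already delivered by Lemma~\ref{1706r1}, so the self-map aspect is immediate (with well-definedness being part of the standing hypothesis).

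For the contraction estimate, I would fix $f,g\in \mathcal{C}^*$ and exploit the affine-in-$z$ structure of $g_i$. For any $i\in\mathcal{N}$ and $x\in l_i(K)$, the definition of $T$ together with \eqref{0702e1} gives
\begin{align*}
|T(f)(x)-T(g)(x)| &= \bigl|g_i\!\left(l_i^{-1}(x),f(l_i^{-1}(x))\right)-g_i\!\left(l_i^{-1}(x),g(l_i^{-1}(x))\right)\bigr|\\
&= \bigl|s_i(l_i^{-1}(x))\bigr|\cdot\bigl|f(l_i^{-1}(x))-g(l_i^{-1}(x))\bigr|\\
&\leq \|s_i\|_\infty\,\|f-g\|_\infty \leq \|s\|_\infty\,\|f-g\|_\infty.
\end{align*}
Since $K=\bigcup_{i\in\mathcal{N}} l_i(K)$, taking the supremum over $x\in K$ yields $\|T(f)-T(g)\|_\infty\le \|s\|_\infty\|f-g\|_\infty$. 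The hypothesis $\|s\|_\infty<1$ then makes $T$ a contraction with ratio $\|s\|_\infty$.

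I do not anticipate a genuine obstacle here, since the linear-in-$z$ form of $g_i$ eliminates any subtlety in the Lipschitz bound and $T$ is already assumed to be well-defined at points where two pieces $l_i(K)$ and $l_j(K)$ overlap. The only mild point worth flagging is that the bound on each piece $l_i(K)$ must be compatible on overlaps, but since the estimate is uniform in $i$ and agrees with the value of $T(f)-T(g)$ on the intersection (by well-definedness of $T$), passing to the supremum over all of $K$ causes no issue.
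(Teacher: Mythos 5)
Your proposal is correct and the contraction estimate is essentially identical to the paper's: both exploit the affine-in-$z$ form of $g_i$ to get $|T(f)(x)-T(g)(x)|\le \|s_i\|_\infty\|f-g\|_\infty\le\|s\|_\infty\|f-g\|_\infty$ and then take the supremum over $K$. The only difference is that you also spell out the completeness of $(\mathcal{C}^*,\|\cdot\|_\infty)$ as a closed subspace of $C(K)$, a point the paper's proof leaves implicit.
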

\begin{proof}
	For $f_1,f_2 \in \mathcal{C}^*$, we get \begin{align*}
		\|T(f_1)-T(f_2)\|_{\infty}
		&\leq \underset{i\in \mathcal{N}}{\max}~\underset{x\in K}{\max}~|g_{i}(x,f_1(x)-g_{i}(x,f_2(x)|\\
		&= \underset{i\in \mathcal{N}}{\max}~\underset{x\in K}{\max}~|s_i(x)||f_1(x)-f_2(x)|= \|s\|_{\infty}\|f_1-f_2\|_{\infty}.
	\end{align*}
\end{proof}
\begin{corollary}
	There exists a unique continuous function $f^*:K \to \mathbb{R}$ such that $f^*|_V=p|_V$ and 
	\begin{equation}\label{2101e1}
		f^*\left(l_{i}(x)\right)=s_{i}(x)f^*(x)+q_{i}(x)\;\;\; \text{for}~x\in K,i \in \mathcal{N}.
	\end{equation}
\end{corollary}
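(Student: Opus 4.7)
The plan is to obtain $f^*$ as the unique fixed point of the operator $T$ and then decode what the fixed point equation says. Concretely, I would first invoke Theorem \ref{306t1}, which asserts that $T$ is a contraction on the complete metric space $(\mathcal{C}^*,\|\cdot\|_\infty)$; the Banach contraction principle then provides a unique $f^* \in \mathcal{C}^*$ with $T(f^*)=f^*$.

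Next, to recover the functional equation \eqref{2101e1}, I would simply unfold the definition of $T$: for fixed $i \in \mathcal{N}$ and $x \in K$, the identity $T(f^*)(l_i(x))=f^*(l_i(x))$ combined with \eqref{0702e1} gives $f^*(l_i(x))=g_i(x,f^*(x))=s_i(x)f^*(x)+q_i(x)$. To establish the interpolation condition $f^*|_V=p|_V$, I would use Lemma \ref{1706r1}: since $f^*=T(f^*)\in T(\mathcal{C}^*)\subseteq \mathcal{C}^{**}$, the restriction of $f^*$ to $V$ coincides with $p$.

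The only point worth a moment of care is uniqueness \emph{among all} continuous solutions of the system, not merely within $\mathcal{C}^*$. I would handle this by noting that any continuous $f:K\to\mathbb{R}$ satisfying $f|_V=p|_V$ and \eqref{2101e1} automatically lies in $\mathcal{C}^*$ (because $V_0\subseteq V$, so $f|_{V_0}=p|_{V_0}$) and is a fixed point of $T$ by construction; the uniqueness clause of Banach's theorem then forces $f=f^*$.

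There is no real obstacle here: the delicate groundwork (well-definedness of $T$ at overlap points of the cells $l_i(K)$, the join-up relations \eqref{1324e2}, and the contractivity estimate) has already been absorbed into the hypotheses and into Theorem \ref{306t1} and Lemma \ref{1706r1}, so the corollary reduces to mechanically assembling these ingredients via the Banach fixed point theorem.
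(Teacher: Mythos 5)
Your proposal is correct and follows essentially the same route as the paper: apply Theorem \ref{306t1} together with the Banach contraction principle to get the unique fixed point $f^*$ of $T$ in $\mathcal{C}^*$, unfold the definition of $T$ to obtain \eqref{2101e1}, and use Lemma \ref{1706r1} to conclude $f^*\in\mathcal{C}^{**}$, i.e.\ $f^*|_V=p|_V$. Your additional observation that any continuous solution of the system automatically lies in $\mathcal{C}^*$ and is a fixed point of $T$ (so that uniqueness holds among \emph{all} continuous functions, not just within $\mathcal{C}^*$) is a small but welcome completion of a step the paper leaves implicit.
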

\begin{proof}
	From Theorem \ref{306t1} and the Banach contraction principle, we conclude that there exists $f^*\in \mathcal{C}^{*}$ such that $f^*=T(f^*)\overset{\text{Lemma} \ref{1706r1}}{\in} \mathcal{C}^{**}$.
\end{proof}
We call this unique map $f^*$  \textit{fractal interpolation function (FIF) on $K$}.\\
\begin{proposition}\label{167p1}
	$G_{f^*}$ is a fixed point of  the Hutchison operator $\mathcal{F}_\mathcal{S}$ of $\mathcal{S}$, where $G_{f^*}$ denotes the graph of $f^*$.
\end{proposition}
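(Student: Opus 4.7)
The plan is to show the set equality $\bigcup_{i\in\mathcal{N}} f_i(G_{f^*}) = G_{f^*}$ directly from the functional equation \eqref{2101e1} satisfied by $f^*$, together with the self-similarity identity $K = \bigcup_{i\in\mathcal{N}} l_i(K)$ that comes from $K$ being the attractor of the IFS $\{(\mathbb{R}^m,\|\cdot\|_2),(l_i)_{i\in\mathcal{N}}\}$.

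First I would parametrize the image of the graph under each $f_i$. Starting from the definition $f_i(x,z) = (l_i(x), g_i(x,z))$ and the formula $g_i(x,z) = s_i(x)z + q_i(x)$, I would plug in a typical point $(x, f^*(x))$ of $G_{f^*}$ to obtain $f_i(x, f^*(x)) = (l_i(x),\, s_i(x)f^*(x) + q_i(x))$, and then invoke \eqref{2101e1} to rewrite the second coordinate as $f^*(l_i(x))$. Thus $f_i(G_{f^*}) = \{(l_i(x), f^*(l_i(x))) : x \in K\}$, which is exactly the graph of $f^*$ restricted to the piece $l_i(K)$ of $K$.

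Next I would take the union over $i\in\mathcal{N}$ and use $K = \bigcup_{i\in\mathcal{N}} l_i(K)$ (true because $K$ is the attractor of $\{(\mathbb{R}^m,\|\cdot\|_2),(l_i)_{i\in\mathcal{N}}\}$, so $K$ is a fixed point of its Hutchinson operator) to get
\[
\bigcup_{i\in\mathcal{N}} f_i(G_{f^*}) = \bigl\{(y, f^*(y)) : y \in \textstyle\bigcup_i l_i(K)\bigr\} = \{(y, f^*(y)) : y\in K\} = G_{f^*}.
\]
This is precisely the statement $\mathcal{F}_\mathcal{S}(G_{f^*}) = G_{f^*}$.

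There is no real obstacle here: the argument is a direct verification and the only ingredient beyond the functional equation is the self-similar decomposition of $K$. The mildest point to be careful about is making sure the parametrization in the first step genuinely ranges over $K$ (not some proper subset), which is automatic since $G_{f^*}$ is defined over all of $K$ and $l_i : K \to K$.
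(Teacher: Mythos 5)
Your proposal is correct and follows essentially the same route as the paper: both identify $f_i(G_{f^*})$ with the piece $\{(y,f^*(y)):y\in l_i(K)\}$ via the fixed-point equation (\ref{2101e1}) (equivalently $f^*=T(f^*)$) and then assemble $G_{f^*}$ from the self-similar decomposition $K=\bigcup_{i\in\mathcal{N}}l_i(K)$; the only difference is that you read the chain of equalities in the opposite direction from the paper's display (\ref{47e4}).
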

\begin{proof}
	Since $G_{f^*}\in \mathcal{H}(K\times \mathbb{R})$ and
	\begin{align}\label{47e4}
		\nonumber	G_{f^*}&=\underset{i=1}{\overset{N}{\cup}}\{\left(x, f^*(x)\right):x\in l_{i}(K)\}\overset{f^*=T(f^*)}{=}\underset{i=1}{\overset{N}{\cup}}\{(l_{i}(x), g_{i}(x,f^*(x))):x\in K\}\\
		&=\underset{i=1}{\overset{N}{\cup}}f_i(G_{f^*}),
	\end{align}
	we get the result.
\end{proof}
\begin{theorem}
	If  $\mathcal{S}$ has an attractor $A_{\mathcal{S}}$ (say), then $$A_{\mathcal{S}}=G_{f^*},$$ 
\end{theorem}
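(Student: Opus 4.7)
The plan is to invoke uniqueness of the fixed point of the Hutchinson operator, combined with Proposition \ref{167p1}. The argument is essentially a one-liner once we verify that $G_{f^*}$ lives in the correct space.

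First I would check that $G_{f^*}\in \mathcal{H}(K\times \mathbb{R})$. Since $K$ is the attractor of the IFS $\{(\mathbb{R}^m,\|.\|_2),(l_i)_{i\in \mathcal{N}}\}$ whose maps are similarities with contractivity factors $r_i<1$, the set $K$ is a non-empty compact subset of $\mathbb{R}^m$. The map $f^*:K\to \mathbb{R}$ is continuous, so $G_{f^*}=\{(x,f^*(x)):x\in K\}$ is the continuous image of the compact set $K$ under $x\mapsto (x,f^*(x))$, hence a non-empty compact subset of $K\times \mathbb{R}$. This places $G_{f^*}$ in $\mathcal{H}(K\times \mathbb{R})$, where the Hutchinson operator $\mathcal{F}_{\mathcal{S}}$ of $\mathcal{S}$ acts.

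Next, by Proposition \ref{167p1}, we have $\mathcal{F}_{\mathcal{S}}(G_{f^*})=G_{f^*}$, so $G_{f^*}$ is a fixed point of $\mathcal{F}_{\mathcal{S}}$ in $\mathcal{H}(K\times \mathbb{R})$. By hypothesis, $\mathcal{S}$ admits an attractor $A_{\mathcal{S}}$; by the definition given in the preliminaries, this attractor is the \emph{unique} fixed point of $\mathcal{F}_{\mathcal{S}}$. Therefore $A_{\mathcal{S}}=G_{f^*}$.

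There is no real obstacle here. The only thing one must be careful about is the ambient space: the fixed point uniqueness only applies inside $\mathcal{H}(K\times \mathbb{R})$, which is why the compactness verification in the first step matters. All other ingredients (the fixed point equation for $G_{f^*}$ and the existence hypothesis on $A_{\mathcal{S}}$) are supplied verbatim by earlier results.
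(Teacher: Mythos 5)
Your proposal is correct and matches the paper's argument exactly: the paper likewise cites Proposition \ref{167p1} together with the uniqueness of the fixed point of the Hutchinson operator implicit in the definition of an attractor. The compactness check on $G_{f^*}$ that you spell out is already recorded in the proof of Proposition \ref{167p1}, so nothing is missing on either side.
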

\begin{proof}
	Since $A_{\mathcal{S}}$ is the unique fixed point of the Hutchison operator of $\mathcal{S}$, from Proposition \ref{167p1}, we get the result. 
\end{proof}
\begin{lemma}
	$f_{i}:K\times [-M,M]\to K\times [-M,M],i\in \mathcal{N}$ are well-defined operators, where $M:=\frac{\underset{i\in \mathcal{N}}{\max}~\|q_{i}\|_{\infty}}{1-\|s\|_{\infty}}$.
\end{lemma}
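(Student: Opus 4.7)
The plan is to verify the two coordinate conditions separately. Writing $f_i(x,z) = (l_i(x), g_i(x,z))$, I need to check that $l_i(x) \in K$ whenever $x \in K$, and that $|g_i(x,z)| \leq M$ whenever $x \in K$ and $|z| \leq M$.

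The first condition is immediate: since $K$ is the attractor of the IFS $\{(\mathbb{R}^m,\|\cdot\|_2),(l_i)_{i\in\mathcal{N}}\}$, the Hutchinson operator fixes $K$, so in particular $l_i(K) \subseteq K$ for each $i \in \mathcal{N}$.

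The second condition is where the definition of $M$ is engineered to make things work. I would apply the triangle inequality to $g_i(x,z) = s_i(x)z + q_i(x)$ to obtain
\[
|g_i(x,z)| \leq \|s_i\|_\infty \, |z| + \|q_i\|_\infty \leq \|s\|_\infty M + \max_{j \in \mathcal{N}} \|q_j\|_\infty.
\]
Since $M(1-\|s\|_\infty) = \max_{j\in\mathcal{N}}\|q_j\|_\infty$ by definition, the right-hand side equals $M$, which gives $g_i(x,z) \in [-M,M]$ as required. I don't anticipate any obstacle here: the proof is a direct computation, and the hypothesis $\|s\|_\infty < 1$ assumed in Section~\ref{2401s3} is precisely what makes $M$ finite and the bound tight.
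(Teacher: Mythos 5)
Your proof is correct and follows essentially the same computation as the paper's: bound $|g_i(x,z)|$ by $\|s\|_\infty M + \max_j\|q_j\|_\infty$ and note this equals $M$ by the choice of $M$. You are slightly more careful than the paper in explicitly verifying $l_i(K)\subseteq K$ (which the paper leaves implicit, as $K$ is the attractor) and in writing $|z|$ where the paper's displayed inequality has $z$, but the argument is the same.
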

\begin{proof}
	For $(x,z)\in K \times [-M,M]$ and $i\in \mathcal{N}$, we get
	\begin{align*}
		|g_{i}(x,z)|\leq \|s_i\|_{\infty}z+\|q_{i}\|_{\infty}\leq \|s\|_{\infty}M+\underset{i\in \mathcal{N}}{\max}~\|q_{i}\|_{\infty}= M.
	\end{align*}
\end{proof}
\begin{proposition}
	If $s_{i}$'s are  H\"{o}lder continuous, then  $\mathcal{S}$ has an attractor.
\end{proposition}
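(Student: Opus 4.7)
The plan is to invoke Remark~\ref{1thm1} by producing a metric on $K\times[-M,M]$ in which each $f_i$ is a contraction. The preceding lemma already shows $f_i(K\times[-M,M])\subseteq K\times[-M,M]$, so the ambient space can be taken to be this compact (hence complete) subset; since $K\times[-M,M]$ is compact, any metric topologically equivalent to the Euclidean one will give a complete metric space, and Hutchinson's theorem will then deliver the attractor.

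The key step is the choice of metric. Let $\eta\in(0,1]$ be a common H\"older exponent for the $s_i$ (the minimum of the individual exponents), let $H$ bound the corresponding H\"older constants, and put
\[
d_\theta\bigl((x,z),(x',z')\bigr)\;:=\;\|x-x'\|_2^{\eta}+\theta\,|z-z'|,
\]
with $\theta>0$ to be fixed. Using that $l_i$ is a similarity of ratio $r_i$, and splitting
\[
g_i(x,z)-g_i(x',z')=s_i(x')(z-z')+z\bigl(s_i(x)-s_i(x')\bigr)+\bigl(q_i(x)-q_i(x')\bigr),
\]
together with $|z|\le M$, $|s_i|\le \|s\|_\infty$, and H\"older continuity of the $s_i$ (and, implicitly, a H\"older-type control on the $q_i$, see below), I would derive an estimate of the form
\[
d_\theta\bigl(f_i(x,z),f_i(x',z')\bigr)\;\le\;\bigl(r_i^{\eta}+\theta C\bigr)\|x-x'\|^{\eta}+\theta\|s\|_\infty\,|z-z'|,
\]
with $C$ a constant absorbing $MH$ and the H\"older data of the $q_i$. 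Since $\max_i r_i^{\eta}<1$ and $\|s\|_\infty<1$, picking $\theta$ small enough forces $\max_i r_i^{\eta}+\theta C<1$; the common contraction factor is then $c:=\max\bigl(\max_i r_i^{\eta}+\theta C,\,\|s\|_\infty\bigr)<1$, and Remark~\ref{1thm1} yields the attractor.

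The one real obstacle in this otherwise mechanical computation is the $q_i(x)-q_i(x')$ term: the proposition hypothesizes H\"older continuity only on the $s_i$, whereas the weighted-metric argument above asks for a like estimate on the $q_i$. My first attempt would be to read the hypothesis as covering both maps (since $q_i$ is free beyond the join-up conditions, this costs nothing). If one genuinely insists on only $s_i$ H\"older, I would fall back to a diameter-shrinking criterion in the style of Hata: using the explicit expansion $g_\omega(x,z)=S_\omega(x)z+Q_\omega(x)$ with $\|S_\omega\|_\infty\le\|s\|_\infty^{|\omega|}$, one can bound $\operatorname{diam}\bigl(f_\omega(K\times[-M,M])\bigr)$ by a combination of $\|s\|_\infty^{|\omega|}$, an oscillation of $S_\omega$ controlled via the H\"older exponent of $s_i$ and telescoping, and an oscillation of $Q_\omega$ controlled via the uniform continuity of the $q_i$ on the compact set $K$ (splitting the sum at $|\omega|/2$ to handle the two regimes). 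This shows $\operatorname{diam} f_\omega(K\times[-M,M])\to 0$ uniformly as $|\omega|\to\infty$, from which the existence of an attractor follows.
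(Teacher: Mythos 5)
Your primary argument has a real gap, and you have correctly located it: with the metric $\|x-x'\|_2^{\eta}+\theta|z-z'|$ the term $q_i(x)-q_i(x')$ survives, and the proposition gives you no H\"older control on the $q_i$ (they are only assumed continuous, subject to the join-up conditions). Your first fallback --- reading the hypothesis as also covering the $q_i$ --- proves a weaker statement, not the one asserted. The idea you are missing is the paper's choice of metric: it measures the second coordinate \emph{relative to the already-constructed FIF}, namely
\[
d\bigl((x,z),(x',z')\bigr)=\|x-x'\|_2^{\eta}+\theta\,\bigl|(z-f^*(x))-(z'-f^*(x'))\bigr|,
\]
which is topologically equivalent to the Euclidean metric. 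Note that $f^*$ exists before this proposition is invoked (it comes from the Banach fixed point argument on $\mathcal{C}^*$, which needs nothing beyond continuity). Under $f_i$ the relevant quantity becomes $g_i(x,z)-f^*(l_i(x))$, and the self-referential identity (\ref{2101e1}) gives $g_i(x,z)-f^*(l_i(x))=s_i(x)\bigl(z-f^*(x)\bigr)$: the $q_i$ cancel \emph{exactly}, and what remains is controlled by $\|s_i\|_\infty$, the bound $|z-f^*(x)|\le 2M$, and the H\"older estimate on $s_i$ alone. Choosing $\theta=(1-\max_i r_i^{\eta})/(2M\max_i H_i+1)$ then yields contraction factors $c_i=\max\{r_i^{\eta}+2\theta MH_i,\|s\|_\infty\}<1$, and Remark \ref{1thm1} applies. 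This is why the stated hypothesis suffices.

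Your second fallback (the diameter-shrinking argument on the words $\omega$, splitting the telescoped oscillation of $Q_\omega$ at $|\omega|/2$ and using only uniform continuity of the $q_i$) is sound in principle and would in fact prove a slightly stronger statement, since it never uses the H\"older hypothesis quantitatively. But it is a genuinely different route from the paper's, it is only sketched, and it cannot be closed by citing Remark \ref{1thm1}, which requires the $f_i$ to be contractions in some metric: you would have to separately prove (or import) the Hata-type criterion that uniform shrinking of $\operatorname{diam} f_\omega(K\times[-M,M])$ forces the Hutchinson operator to have a unique, globally attracting fixed point. As submitted, neither branch of your proposal is a complete proof of the proposition as stated.
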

\begin{proof}
	From (\ref{2101e1}), we get
	\begin{equation}\label{0702e2}
		\|f^*\|_{\infty}\leq \|s\|_{\infty}\|f^*\|_{\infty}+\underset{i\in \mathcal{N}}{\max}~\|q_{i}\|_{\infty}~~\Rightarrow~~	\|f^*\|_{\infty}\leq M.
	\end{equation}
	From the assumption, there exist $H_{i}\in [0,\infty)$ for $i\in \mathcal{N}$ and $\eta \in (0,1]$ such that
	\begin{equation}\label{0702e11}|s_{i}(x)-s_{i}(x')|\leq H_{i}\|x-x'\|_2^{\eta}\;\;\;\text{for}\;x,x'\in K,i\in \mathcal{N}.\end{equation}
	For $\theta=\frac{1-\underset{i\in \mathcal{N}}{\max}~r_i^{\eta}}{2M \underset{i\in \mathcal{N}}{\max}~ H_{i}+1}>0,$ let us	consider the metric $d$ on $K \times \mathbb{R}$, given by
	$$d((x,z),(x',z'))=\|x-x'\|_2^{\eta}+\theta|(z-f^*(x))-(z'-f^*(x'))|,$$
	for $(x,z),(x',z')\in K\times \mathbb{R}$, which is equivalent to the Euclidean metric.\\
	For $i \in \mathcal{N}$ and $(x,z),(x',z')\in K\times [-M,M]$, we have
	\begin{align*}
		&d(f_{i}(x,z),f_i(x',z'))\\
		&=\|l_i(x)-l_{i}(x')\|_2^{\eta}+\theta|s_{i}(x)z+q_{i}(x)-f^*(l_{i}(x)))-(s_{i}(x')z'+q_{i}(x')-f^*(l_{i}(x')))|\\
		&\overset{(\ref{2101e1})}{=} r_i^{\eta}\|x-x'\|_2^{\eta}+\theta|s_{i}(x)(z-f^*(x))-s_{i}(x')(z'-f^*(x')))|\\
		&\leq r_i^{\eta}\|x-x'\|_2^{\eta}+\theta\|s_{i}\|_{\infty}|(z-f^*(x))-(z'-f^*(x'))|\\
		&~~~+\theta|(z'-f^*(x'))||s_{i}(x)-s_{i}(x')|\\
		& \overset{(\ref{0702e2})\&(\ref{0702e11})}{\leq} r_i^{\eta}\|x-x'\|_2^{\eta}+\theta\|s\|_{\infty}|(z-f^*(x))-(z'-f^*(x'))|+\theta2M H_{i}\|x-x'\|_2^{\eta}\\
		&\leq c_id((x,z),(x',z')),
	\end{align*}
	where  $c_i=\max \left\{r_i^{\eta}+\theta2M H_{i},\|s\|_{\infty}\right\}<1$.\\
	Thus,  $f_{i}$'s are contractions on $(K\times [-M,M],d)$.\\
	From Remark \ref{1thm1}, we get the result.
\end{proof}

\begin{case}\label{1324ca1}
	If $K$ is a p.c.f. self-similar set and $V_0$ is the boundary of $K$,  then  by Remark \ref{131re1} (ii), we get 
	\begin{equation}\label{0407e1}
		l_{i}(K)\cap l_{i'}(K)=l_{i}(V_0)\cap l_{i'}(V_0)~~\text{for}~~i,i'\in \mathcal{N}~\text{with}~i\neq i'.
	\end{equation}
	From equation  (\ref{11324e1}), we get
	\begin{equation}\label{0407e2}
		T(f)|_{l_i(V_0)}=p|_{l_i(V_0)}~~\text{for}~f\in \mathcal{C}^*,i\in \mathcal{N}.
	\end{equation}
	Let $f\in \mathcal{C}^*$ and  $x\in l_{i}(K)\cap l_{i'}(K)$ for some $i,i'\in \mathcal{N}$ with $i\neq i'$.\\
	From equation (\ref{0407e1}) and (\ref{0407e2}), we get	
	$$T(f)(x)=p(x)$$
	by viewing $x$ as an entity belonging to $l_{i}(K)$ and  $l_{i'}(K)$. \\
	Therefore $T(f)$ is a continuous function. Thus, $T$ is well defined.
\end{case}
\begin{remark}If $K$ is a p.c.f. self-similar set, then for any given data on it's boundary $V_0$,  there exists a unique harmonic function on $K$ such that it satisfies the given data (see \cite{KI}). This gives the guarantee for the existence of $q_i$'s as in (\ref{1324e2}).\end{remark}
\begin{remark}\label{13324r1}
	If $K$ is a p.c.f. self-similar set and $V_0$  is  its boundary with respect to the IFS  $\{(\mathbb{R}^m,\|.\|_2),(l_i)_{i=1}^N\}$, then  for any $n\in \mathbb{N}$, $K$ $(V_0)$  is again a p.c.f. self-similar set (boundary of $K$)  with respect to the IFS
	$\{(\mathbb{R}^2,\|.\|_2),\{l_{\omega}\}_{\omega \in \mathcal{N}^n}\}$.  So, for any $n\in \mathbb{N},$ we can get a FIF $f^*$ for the given data on $(V=)V_n:=\underset{\omega\in \mathcal{N}^n}{{\cup}}l_{\omega}(V_0)$. Note that $V_n\to K$ as $n\to \infty$. 	
\end{remark}
\begin{remark}
	Sierpi\'nski Gasket (SG),  Sierpi\'nski sickle, Koch curve, Hata's tree-like set are some of the  examples of p.c.f. self-similar sets.\\
	The IFS of SG is $\{(\mathbb{R}^2,\|.\|_2),\{l_i\}_{i=1}^3\}$, where $l_i(x)=\frac{1}{2}(x+k_i)$ for $i\in \{1,2,3\}$ and $\{k_1,k_2,k_3\}$ are the vertices of an equilateral triangle. In this case,  the boundary of $K$ is $V_0=\{k_1,k_2,k_3\}$.
\end{remark}
\begin{case}\label{1324ca2}
	If $$V=\left\{(x_{1i_1},x_{2i_2},\dots,x_{mi_m})\in \mathbb{R}^{m}:i_u\in \{0,1,\dots,n_u\},u\in \{1,2,\dots,m\}\right\}$$ with $x_{u0}<x_{u1}<\dots <x_{un_u}$ for $u\in \{1,2,\dots,m\}$, then we define $l_i:\mathbb{R}^{m}\to \mathbb{R}^{m}$ to be
	$$l_{i}(x)=\left(l_{1i_1}(x_1),l_{2i_2}(x_2),\dots,l_{mi_m}(x_m)\right),$$
	for $x=(x_1,x_2,\dots,x_m)\in \mathbb{R}^{m}$ and $i=(i_1,i_2,\dots,i_m)\in \mathcal{N},$ where 
	\begin{itemize}
		\item[$-$] $\mathcal{N}=\{(i_1,i_2,\dots i_m):i_u\in \{1,2,\dots,n_u\},u\in \{1,2,\dots,m\}\}$,
		\item[$-$] $l_{ui_u}(t)=\left(\frac{x_{u(i_u-\epsilon_{ui_u})}-x_{u(i_u-1+\epsilon_{ui_u})}}{x_{un_u}-x_{u0}}\right)t+\left(\frac{x_{u(i_u-1+\epsilon_{ui_u})}x_{un_u}-x_{u(i_u-\epsilon_{ui_u})}x_{u0}}{x_{un_u}-x_{u0}}\right)$ for $t\in \mathbb{R}, u\in \{1,2,\dots,m\}$ and
		\item[$-$] $\epsilon_u=(\epsilon_{u1},\epsilon_{u2},\dots,\epsilon_{un_u})\in \{0,1\}^{n_u}$  for $u\in \{1,2,\dots,m\}$ (which is called signature).
	\end{itemize}   
	We defined $l_{ui_u}$ for $u\in \{1,2,\dots,m\}$ such that 
	$$l_{ui_u}([x_{u0},x_{un_u}])=[x_{u(i_u-1)},x_{ui_u}]$$
	and $$l_{ui_u}(x_{u0})=x_{u(i_u-1+\epsilon_{ui_u})}~~\text{and}~~~l_{ui_u}(x_{un_u})=x_{u(i_u-\epsilon_{ui_u})}.$$
	From our construction, we get 
	\begin{itemize}
		\item[$-$] $V_0=\left\{(x_{1i_1},x_{2i_2},\dots,x_{mi_m})\in \mathbb{R}^{m}:i_u\in \{0,n_u\},u\in \{1,2,\dots,m\}\right\}$ and
		\item[$-$] $K=[x_{10},x_{1n_1}]\times [x_{20},x_{2n_2}]\times \dots \times [x_{m0},x_{mn_m}]$.
	\end{itemize}	  
	\begin{itemize}
		\item[(i)] If $m=1$, then $K$ is a p.c.f. self-similar set and $V_0$ is its boundary. So, $T$ is well-defined. \\
		In this case, we obtain the fractal interpolation function of the zipper on an interval (see \cite{cvvt}).\\
		If we choose $\varepsilon_u=0$ for	$u\in \{1,2,\dots,m\}$ and $s_i$'s are constant functions, we get the  standard fractal interpolation function on an interval (see \cite{b1}).\\
		\item[(ii)]	If $m> 1$, then 	$T$ is well-defined provided that
		$$\varepsilon_u=(0,1,0,1,\dots)~\text{or}~\varepsilon_u= (1,0,1,0,\dots)~\text{for}~	u\in \{1,2,\dots,m\}$$ 
		and 
		$$	F_{i}(x,z)=F_{(	i_1\dots i_{j-1},i_{j}+1,i_{j+1}\dots i_m)}(x,z),$$
		for $i=(i_1,i_2\dots i_m)\in \mathcal{N},j\in \{1,2\dots m\}$ with $i_j\in \{1,2\dots n_j-1\}$,\\ $x=(x_1\dots x_{j-1},x_j^*,x_{j+1}\dots x_m)\in K$ with $x_j^* = l_{ji_j}^{-1}(x_{ji_j})=l_{j(i_j+1)}^{-1}(x_{ji_j})$ and $z\in \mathbb{R}$. \\
		For more details ref. \cite{mass,fis-13,KUM}.		
		In this case, we call $f^*$  \textit{multivariate FIF}. \\
		In particular, if $\varepsilon_u=(0,1,0,1,\dots)$
		for  $u\in  \{1,2,\dots,m\}$, $s_i$'s are equal constants i.e., there exists unique $s\in(-1,1)$ such that $s_i(x)=s$ for $x\in K$ and $i\in \mathcal{N}$, and	$$q_i(x)=\underset{J\subseteq \{1,2,\dots,m\}}{\sum}e_{i,J}\underset{j\in J}{\prod}x_j~~~\text{for}~x=(x_1,x_2,\dots,x_m) \in K,i\in \mathcal{N},$$
		where $e_{i,J}$'s are constants, then $T$ is well-defined (see \cite{MAL}).

	\end{itemize}
\end{case}

\section{Box dimension of  fractal  functions}\label{2401s4}
For a continuous function $f:K \to \mathbb{R}$ and $\omega\in \mathcal{N}^k$, we define the oscillation of $f$ over $l_{\omega}(K)$ by
\begin{align*}
	\text{Osc}_{\omega}(f)=\underset{x\in l_{\omega}(K)}{\sup}f(x)-\underset{x\in l_{\omega}(K)}{\inf}f(x)=\underset{x,x' \in l_{\omega}(K)}{\sup}|f(x)-f(x')|
\end{align*}
and total oscillation of
order $k$ by
$$\text{Osc}(k,f) =\underset{\omega \in \mathcal{N}^k}{\sum} \text{Osc}_{\omega}(f).$$
We  define oscillation space on $K$, for $\eta\in \left[0,\log_{\Lambda} N\right]$,  as follows (Ref. \cite{DEL})
$$\mathcal{C}^{\eta}(K)= \{f : K \to \mathbb{R}: f ~\text{is continuous and}~[f]_{{\eta}} < \infty\},$$
where $[f]_{{\eta}}=\underset{k\in \mathbb{N}}{\sup} \frac{\text{Osc}(k,f)}{\Lambda^{k\left(\log_{\Lambda} N-\eta\right)}}$ and $\Lambda^{-1}=\underset{i\in \mathcal{N}}{\max}~r_i$.
\begin{remark}
	If $m=1$ and $\eta=\log_{\Lambda} N,$ then $\mathcal{C}^{\eta}(K)$ is the collection of continuous bounded variation functions on $K$.
\end{remark}
In the following proposition, we discuss the relation between H\"{o}lder continuous functions and oscillation spaces, draws inspiration from \cite{car}.
\begin{proposition}\label{2612prop1}
	If $f:K\to \mathbb{R}$ is a  H\"{o}lder continuous function with exponent $\eta \in (0,1],$ then $f\in \mathcal{C}^{\eta}(K).$
\end{proposition}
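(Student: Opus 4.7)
The plan is to bound $\mathrm{Osc}(k,f)$ directly by combining the Hölder estimate with the geometric decay of $\mathrm{diam}(l_\omega(K))$. First I would observe that for any word $\omega = (\omega_1, \dots, \omega_k) \in \mathcal{N}^k$, the composition $l_\omega = l_{\omega_1} \circ \cdots \circ l_{\omega_k}$ is itself a similarity on $\mathbb{R}^m$ with contractivity factor $\prod_{j=1}^k r_{\omega_j} \leq \Lambda^{-k}$, hence $\mathrm{diam}(l_\omega(K)) \leq D \Lambda^{-k}$ where $D := \mathrm{diam}(K)$.

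Next, applying the Hölder hypothesis with constant $H$ to any two points $x, x' \in l_\omega(K)$ yields $|f(x) - f(x')| \leq H \|x - x'\|_2^\eta \leq H D^\eta \Lambda^{-k\eta}$, and taking supremum gives $\mathrm{Osc}_\omega(f) \leq H D^\eta \Lambda^{-k\eta}$. Summing over the $N^k$ words of length $k$ produces
\begin{equation*}
\mathrm{Osc}(k,f) \leq N^k \cdot H D^\eta \Lambda^{-k\eta}.
\end{equation*}

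The final step is just to notice the arithmetic identity $\Lambda^{k(\log_\Lambda N - \eta)} = N^k \Lambda^{-k\eta}$, so dividing gives
\begin{equation*}
\frac{\mathrm{Osc}(k,f)}{\Lambda^{k(\log_\Lambda N - \eta)}} \leq H D^\eta
\end{equation*}
uniformly in $k$, whence $[f]_\eta \leq H D^\eta < \infty$. Continuity of $f$ is immediate from the Hölder hypothesis, so $f \in \mathcal{C}^\eta(K)$.

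There is essentially no obstacle here — the argument is a one-line calculation once the similarity structure of $l_\omega$ is used to control $\mathrm{diam}(l_\omega(K))$. The only mildly subtle point is recognizing that the exponent $\log_\Lambda N$ in the denominator is precisely tuned so that the $N^k$ coming from the cardinality of $\mathcal{N}^k$ cancels against the $\Lambda^{k \log_\Lambda N}$ factor, leaving $\Lambda^{-k\eta}$ to match the Hölder decay rate.
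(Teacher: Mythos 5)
Your proof is correct and follows essentially the same route as the paper: bound $\mathrm{Osc}_\omega(f)$ via the H\"older estimate together with $\mathrm{diam}(l_\omega(K))\leq \Lambda^{-k}|K|$, sum over the $N^k$ words, and observe that $N^k\Lambda^{-k\eta}=\Lambda^{k(\log_\Lambda N-\eta)}$. No substantive differences.
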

\begin{proof}
	By assumption, there exists a constant $H$ such that
	$$|f(x)-f(x')|\leq H \|x-x'\|_2^{\eta}\;\;\;\text{for}~~x,x'\in K.$$
	Thus, we have
	\begin{align*}
		\text{Osc}(k,f)&=\underset{\omega \in \mathcal{N}^k}{\sum}\underset{x,x' \in l_{\omega}(K)}{\sup}|f(x)-f(x')|\leq H \underset{\omega \in \mathcal{N}^k}{\sum}\underset{x,x' \in K}{\sup} \|l_{\omega}(x)-l_{\omega}(x')\|_2^{\eta}\\
		&\leq H \underset{\omega \in \mathcal{N}^k}{\sum}\left(\Lambda^{-k}\right)^{\eta}\underset{x,x' \in K}{\sup} \|x-x'\|_2^{\eta}=H  N^k\Lambda^{-k\eta}|K|^{\eta}\\
		&=H |K|^{\eta} \Lambda^{k\left(\log_{\Lambda} N-\eta\right)},
	\end{align*}
	for all $k\in \mathbb{N}$, where $|K|$ denotes the diameter of $K$.
\end{proof}
\begin{theorem}	\label{912thm1}
	Let $s_{i},q_{i}\in \mathcal{C}^{\eta}(K)$ for $i \in \mathcal{N}$. If:
	\begin{itemize}
		\item[(i)]  $\gamma\leq \frac{N}{\Lambda^{\eta'}},$ then $$ \dim_H(K)\leq \dim_H \left(G_{f^*}\right)\leq \underline{\dim}_B\left(G_{f^*}\right)\leq\overline{\dim}_B\left(G_{f^*}\right)\leq 1-\eta'+\log_{\Lambda}N;$$
		\item[(ii)]  $\gamma> \frac{N}{\Lambda^{\eta'}},$ then $$ \dim_H(K)\leq \dim_H \left(G_{f^*}\right)\leq \underline{\dim}_B\left(G_{f^*}\right)\leq\overline{\dim}_B\left(G_{f^*}\right)\leq 1+\log_{\Lambda} \gamma,$$
	\end{itemize}
	where $\gamma:=\underset{i\in \mathcal{N}}{\sum}\|s_{i}\|_{\infty}$ and $\eta':=\min \{1,\eta\}$.		
\end{theorem}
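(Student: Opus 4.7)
\textbf{Proof plan for Theorem \ref{912thm1}.}
The lower chain $\dim_H(K) \le \dim_H(G_{f^*})$ follows from the remark on projections (the map $(x,f^*(x))\mapsto x$ is $1$-Lipschitz), and the inequalities $\dim_H \le \underline{\dim}_B \le \overline{\dim}_B$ are standard, so the real content is the upper bound on $\overline{\dim}_B(G_{f^*})$.

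My strategy is to control the total oscillation $\text{Osc}(k, f^*)$ by a recursion deduced from the self-similarity equation (\ref{2101e1}), and then convert oscillations into a box-cover count. Every point of $l_{i\omega'}(K)$ has the form $l_i(y)$ with $y \in l_{\omega'}(K)$, and the identity
\[
f^*(l_i(y)) - f^*(l_i(y')) = s_i(y)\bigl(f^*(y)-f^*(y')\bigr) + f^*(y')\bigl(s_i(y)-s_i(y')\bigr) + \bigl(q_i(y)-q_i(y')\bigr)
\]
yields, after a triangle inequality and a sup over $y,y'\in l_{\omega'}(K)$,
\[
\text{Osc}_{i\omega'}(f^*) \le \|s_i\|_\infty \text{Osc}_{\omega'}(f^*) + \|f^*\|_\infty \text{Osc}_{\omega'}(s_i) + \text{Osc}_{\omega'}(q_i).
\]
Summing over $i\in\mathcal{N}$ and $\omega'\in\mathcal{N}^k$ and using $s_i,q_i\in \mathcal{C}^\eta(K)$ produces the key recursion
\[
\text{Osc}(k+1, f^*) \le \gamma \, \text{Osc}(k, f^*) + C_0 N^k \Lambda^{-k\eta},
\]
where $C_0 = \|f^*\|_\infty \sum_i [s_i]_\eta + \sum_i [q_i]_\eta < \infty$.

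Iterating this recursion gives a geometric-sum estimate for $\text{Osc}(k,f^*)$ whose behaviour depends on the comparison between $\gamma$ and $\mu:=N\Lambda^{-\eta}$: if $\gamma<\mu$ then $\text{Osc}(k,f^*)=O(\mu^k)$, if $\gamma>\mu$ then $\text{Osc}(k,f^*)=O(\gamma^k)$, and equality costs only a factor $k$. For $\delta = \Lambda^{-k}$, each cell $l_\omega(K)$ has horizontal diameter $\le |K|\Lambda^{-k}$ and vertical extent $\text{Osc}_\omega(f^*)$, so it is covered by at most $C_m(\text{Osc}_\omega(f^*)/\delta + 1)$ boxes of side $\delta$; summing over $\omega\in\mathcal{N}^k$ gives
\[
N_\delta(G_{f^*}) \le C_m \bigl(\Lambda^k \text{Osc}(k,f^*) + N^k\bigr).
\]

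Plugging the oscillation bound into this cover estimate and taking $\limsup_k \log N_{\Lambda^{-k}}/(k\log\Lambda)$ yields the theorem after case splitting. If $\gamma \le N\Lambda^{-\eta'}$ then either the $N^k$ floor or the $\Lambda^k\mu^k = N^k\Lambda^{k(1-\eta)}$ term dominates, and the resulting exponent equals $\log_\Lambda N+1-\eta'$ after consolidating with $\eta'=\min(1,\eta)$. If $\gamma > N\Lambda^{-\eta'}$ then $\Lambda^k\gamma^k = (\gamma\Lambda)^k$ beats $N^k$, producing $1+\log_\Lambda\gamma$. The main bookkeeping subtlety is the regime $\eta>1$: the recursion naturally yields $\Lambda^{-k\eta}$ rather than $\Lambda^{-k\eta'}=\Lambda^{-k}$, but any contribution from the oscillation term that drops below the $N^k$ floor is absorbed for free, which is exactly what the substitution $\eta \mapsto \eta' = \min(1,\eta)$ in the final exponent records; carrying this consolidation through the sub-case $N\Lambda^{-\eta}<\gamma\le N\Lambda^{-1}$ of case (i) is where one has to be careful.
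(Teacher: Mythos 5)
Your proposal is correct and follows essentially the same route as the paper: the paper runs the very same recursion, derived from the same triangle-inequality estimate on $f^*\circ l_i$, directly on the per-cell box counts ${N}(k,\omega)$ (absorbing the additive $N^{k+1}$ floor and the replacement of $\eta$ by $\eta'$ into the constant $C$ at the outset), whereas you run it on $\text{Osc}(k,f^*)$ and convert to a cover count only at the end --- an equivalent reparametrization. The iteration, the comparison of $\gamma$ with $N\Lambda^{-\eta'}$, and the two-case conclusion are identical, and your explicit consolidation in the regime $\eta>1$ is exactly what the paper's early substitution of $\eta'$ for $\eta$ accomplishes.
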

\begin{proof}
	Let ${N}(k)$ and ${N}(k,\omega)$ denote the minimum number of cubes of size $\frac{|K|}{\Lambda^{k}}\times \frac{|K|}{\Lambda^{k}} \times\dots  \frac{|K|}{\Lambda^{k}}$ which covers $G_{f^*}$  and   $G_{f^*,\omega}$ respectively, where $G_{f^*,\omega}=\{(x,f^*(x)):x\in l_{\omega}(K)\}$ for $\omega \in \mathcal{N}^k$. \\
	For $i\in \mathcal{N},\omega\in \mathcal{N}^k,k\in \mathbb{N}$ and $x,x'\in l_{\omega}(K)$, we have
	\begin{align*}
		&|f^*(l_{i}(x))-f^*(l_{i}(x'))|\\
		&\overset{(\ref{2101e1})}{\leq}\|s_{i}\|_{\infty}|f^*(x)-f^*(x')|+\|f^*\|_{\infty}|s_{i}(x)-s_{i}(x')|+|q_{i}(x)-q_{i}(x')|\\
		&\leq \|s_{i}\|_{\infty} \frac{{N}(k,\omega)|K|}{\Lambda^k}+ \|f^*\|_{\infty}	\text{Osc}_{\omega}(s_{i})+	\text{Osc}_{\omega}(q_{i}).
	\end{align*}	
	For $i\in \mathcal{N},\omega\in \mathcal{N}^k$ and $k\in \mathbb{N}$, we get	$G_{f^*,(i,\omega)}$ is contained in a cuboid of size $\frac{|K|}{\Lambda^{k+1}}\times \frac{|K|}{\Lambda^{k+1}}\times \dots  \frac{|K|}{\Lambda^{k+1}}\times  \frac{\|s_{i}\|_{\infty}{N}(k,\omega)|K|}{\Lambda^k}+ \|f^*\|_{\infty}	\text{Osc}_{\omega}(s_{i})+	\text{Osc}_{\omega}(q_{i})$.\\
	Since $\Lambda>1$ and
	\begin{equation}\label{2424eq1}
		G_{f^*}\overset{(\ref{47e4})}{=}\underset{\omega \in \mathcal{N}^k}{\cup}G_{f^*,\omega}~\;\;\text{for}~k\in \mathbb{N},
	\end{equation}
	we get
	\begin{align*}
		&{N}(k+1)\leq \underset{\omega\in \mathcal{N}^{k+1}}{\sum}{N}(k+1,\omega)=\underset{i\in \mathcal{N}}{\sum}\underset{\omega\in \mathcal{N}^k}{\sum}{N}(k+1,(i,\omega))\\
		& ~~ \leq \underset{i\in \mathcal{N}}{\sum}\underset{\omega\in \mathcal{N}^k}{\sum}\left(\frac{\Lambda^{k+1}}{|K|}\left( \frac{\|s_{i}\|_{\infty}{N}(k,\omega)|K|}{\Lambda^k}+ \|f^*\|_{\infty}	\text{Osc}_{\omega}(s_{i})+	\text{Osc}_{\omega}(q_{i})\right)+1\right)\\
		& ~~ \leq \Lambda\gamma {N}(k)+ \frac{\Lambda^{k+1}}{|K|}\underset{i\in \mathcal{N}}{\sum}\left(\|f^*\|_{\infty}	[s_{i}]_{\eta}+	[q_{i}]_{\eta}\right)\Lambda^{k\left(\log_{\Lambda} N-\eta\right)}+N^{k+1}\\
		& ~~ \leq \Lambda\gamma{N}(k)+ N^k\Lambda^{k(1-\eta')}C,
	\end{align*}
	for  $k\in \mathbb{N}$,	where $C=\frac{\Lambda}{|K|}\underset{i\in \mathcal{N}}{\sum}\left(\|f^*\|_{\infty}	[s_{i}]_{\eta}+	[q_{i}]_{\eta}\right)+N$.\\
	Via the mathematical induction method,  for $k\in \mathbb{N},$ we  get
	\begin{align}\label{2201e5}
		\nonumber	&{N}(k+1)\leq(\Lambda\gamma)^2{N}(k-1)+\Lambda\gamma\left(N\Lambda^{1-\eta'}\right)^{k-1}C+\left(N\Lambda^{1-\eta'}\right)^kC\\
		\nonumber	&~\leq (\Lambda\gamma)^3{N}(k-2)+C\left((\Lambda\gamma)^2\left(N\Lambda^{1-\eta'}\right)^{k-2}+\Lambda\gamma\left(N\Lambda^{1-\eta'}\right)^{k-1}+\left(N\Lambda^{1-\eta'}\right)^k\right)\\
		\nonumber	&\leq(\Lambda\gamma)^k{N}(1)+C\left((\Lambda\gamma)^{k-1}N\Lambda^{1-\eta'}+(\Lambda\gamma)^{k-2}\left(N\Lambda^{1-\eta'}\right)^2+\dots +\left(N\Lambda^{1-\eta'}\right)^k\right).
	\end{align}
	$\mathit{Case}~(i).$ Consider  $ \gamma\leq \frac{N}{\Lambda^{\eta'}}$, then for $k\in \mathbb{N}$, we have
	\begin{align*}
		&{N}(k+1)\leq (\Lambda\gamma)^k{N}(1)+C\left(N\Lambda^{1-\eta'}\right)^k\left(1+ \frac{\Lambda^{\eta'}\gamma}{N}+\dots+\left(\frac{\Lambda^{\eta'}\gamma}{N}\right)^{k-1}\right)\\
		& ~~\leq \Lambda^k\left(\frac{N}{\Lambda^{\eta'}}\right)^k{N}(1)+C\left(N\Lambda^{1-\eta'}\right)^kk\\
		& ~~\leq \left(N\Lambda^{1-\eta'}\right)^{k+1} (k+1)(N(1)+C).
	\end{align*} 
	Therefore, we get
	\begin{align*}
		&	\overline{\dim}_B(G_{f^*})= \limsup_{k \to \infty}\frac{\log {N}(k+1) }{\log \left(\frac{\Lambda^{k+1}}{|K|}\right)}\leq 1-\eta'+\frac{\log N}{\log \Lambda}.
	\end{align*}
	$\mathit{Case}~(ii).$ Consider  $\gamma> \frac{N}{\Lambda^{\eta'}},$ then for  $k\in \mathbb{N}$, we have
	\begin{align*}
		&{N}(k+1) \leq(\Lambda\gamma)^k{N}(1)+C(\Lambda\gamma)^{k-1}\Lambda\left(\frac{N}{\Lambda^{\eta'}}\right)\left(1+\frac{N}{\Lambda^{\eta'}\gamma}+\dots+\left(\frac{N}{\Lambda^{\eta'}\gamma}\right)^{k-1}\right)\\
		&\leq(\Lambda\gamma)^k{N}(1)+C(\Lambda\gamma)^{k-1}\Lambda\gamma \frac{1}{1-\frac{N}{\Lambda^{\eta'}\gamma}}\\	&\leq(\Lambda\gamma)^{k+1}\left({N}(1)+\frac{C}{1-\frac{N}{\Lambda^{\eta'}\gamma}}\right).
	\end{align*} 
	Hence
	\begin{align*}
		\overline{\dim}_B(G_{f^*})\leq 1+\log_{\Lambda} \gamma.
	\end{align*}
\end{proof}
Let us denote $\Lambda_0^{-1}=\underset{i\in \mathcal{N}}{\min}~r_i$, and for $r\in \{0,1,2,\dots,m\}$, we denote:
\begin{itemize}	
	\item[$-$]  $\gamma_{1,r}=\underset{i\in \mathcal{N}}{\sum}s_{i,1,r}$, where
	$$s_{i,1,r}=\begin{cases}
		|s_{i}|, &\quad 
		\text{if~} r=0, s_{i} \text{~is a constant and~} q_{i} \text{~is affine} \\
		|s_{i}|, &\quad 
		\text{if~} r\neq 0, s_{i} \text{~is a constant and~}\\		&\quad q_{i} \text{~is affine with respect to the $r^{\text{th}}$ co-ordinate ~} \\
		0, &\quad \text{~~otherwise;}
	\end{cases}$$
	\item[$-$]  $\gamma_{2,r}=\underset{i\in \mathcal{N}}{\sum}s_{i,2,r}$, where
	$$s_{i,2,r}=\begin{cases}
		s_{i}, &\quad 
		\text{if~} r=0,	s_{i}  \text{~is a non-negative constant and~} q_{i} \text{~is concave} \\
		s_{i}, &\quad 
		\text{if~}r\neq 0,	s_{i}  \text{~is a non-negative constant and~}\\
		&\quad q_{i} \text{~is concave with respect to the $r^{\text{th}}$ co-ordinate~} \\
		0, &\quad \text{~~otherwise;}
	\end{cases}$$
	\item[$-$]   $\gamma_{3,r}=\underset{i\in \mathcal{N}}{\sum}s_{i,3,r}$, where
	$$s_{i,3,r}=\begin{cases}
		s_{i}, &\quad 
		\text{if~}r=0,	s_{i}  \text{~is a non-negative constant and~} q_{i} \text{~is convex} \\
		s_{i}, &\quad 
		\text{if~}r\neq 0,	s_{i}  \text{~is a non-negative constant and~}\\
		&\quad q_{i} \text{~is convex with respect to the $r^{\text{th}}$ co-ordinate~} \\
		0, &\quad \text{~~otherwise.}
	\end{cases}$$
\end{itemize}
\begin{lemma}\label{1701l1}
	Assume that there exist $y_1,y_2,y_3\in V$ and $\lambda\in (0,1)$ such that  $y_3=(1-\lambda)y_1+\lambda y_2$ and the set $\{(y_1,p(y_1)),(y_2,p(y_2)),(y_3,p(y_3))\}$ is not collinear i.e., $L:=p(y_3)-((1-\lambda)p(y_1)+\lambda p(y_2))\neq 0$. 	Then for $\omega\in \mathcal{N}^k,k\in \mathbb{N}$, we get
	\begin{itemize}
		\item[(i)] $G_{f^*,\omega}$  must cover a  set of height  $s_{\omega_1,1,0}s_{\omega_2,1,0}\dots s_{\omega_k,1,0}|L|$;
		\item[(ii)] $G_{f^*,\omega}$  must cover a  set of height  $s_{\omega_1,2,0}s_{\omega_2,2,0}\dots s_{\omega_k,2,0}L$ if  $L>0$;  
		\item[(iii)] $G_{f^*,\omega}$  must cover a  set of height  $s_{\omega_1,3,0}s_{\omega_2,3,0}\dots s_{\omega_k,3,0}|L|$ if $L<0$.
	\end{itemize}	
\end{lemma}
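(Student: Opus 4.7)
The plan is to compute the convex-combination defect
\[
D(\omega) := f^*(l_\omega(y_3)) - (1-\lambda)\, f^*(l_\omega(y_1)) - \lambda\, f^*(l_\omega(y_2))
\]
explicitly by iterating (\ref{2101e1}), and then to lower-bound the oscillation of $f^*$ on $l_\omega(K)$ by $|D(\omega)|$. The latter step is elementary: for any three reals $a_1, a_2, a_3$ with $a_3 - (1-\lambda)a_1 - \lambda a_2 = D$, one checks by case-splitting on the sign of $D$ and on which $a_j$ is maximal that
\[
\max_j a_j - \min_j a_j \;\geq\; |D|.
\]
Since $l_\omega(y_1), l_\omega(y_2), l_\omega(y_3) \in l_\omega(V) \subseteq l_\omega(K)$, this already yields the required height bound on $G_{f^*,\omega}$ as soon as $|D(\omega)|$ is shown to be at least the claimed product times $|L|$.

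Each of (i), (ii), (iii) has a trivial regime in which the right-hand side vanishes (some $\omega_j$ violates the standing hypothesis of the corresponding definition of $s_{i,\cdot,0}$), and I restrict attention to the remaining regime in which every $s_{\omega_j}$ is a constant and each $q_{\omega_j}$ has the required affine, concave, or convex form. Because every $l_i$ is a similarity, each composition $l_{\omega_{i+1}\cdots\omega_k}$ is affine and therefore preserves the relation $y_3 = (1-\lambda)y_1 + \lambda y_2$; set $z_j^{(i)} := l_{\omega_{i+1}\cdots\omega_k}(y_j)$. A straightforward induction on $k$ using $f^*(l_i(x)) = s_i f^*(x) + q_i(x)$ yields the closed form
\[
f^*(l_\omega(y)) \;=\; s_{\omega_1}\cdots s_{\omega_k}\, f^*(y) \;+\; \sum_{i=1}^{k} s_{\omega_1}\cdots s_{\omega_{i-1}}\, q_{\omega_i}\bigl(l_{\omega_{i+1}\cdots\omega_k}(y)\bigr)
\]
(with empty products equal to $1$), and since $y_1, y_2, y_3 \in V$ gives $f^*(y_j) = p(y_j)$ by Lemma \ref{1706r1}, forming the appropriate combination yields
\[
D(\omega) \;=\; s_{\omega_1}\cdots s_{\omega_k}\, L \;+\; \sum_{i=1}^{k} s_{\omega_1}\cdots s_{\omega_{i-1}}\, \Delta_i,
\]
where $\Delta_i := q_{\omega_i}(z_3^{(i)}) - (1-\lambda) q_{\omega_i}(z_1^{(i)}) - \lambda q_{\omega_i}(z_2^{(i)})$ is the defect of $q_{\omega_i}$ on the convex triple $(z_1^{(i)}, z_2^{(i)}, z_3^{(i)})$.

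Given this identity, the three cases are immediate. For (i), affineness of every $q_{\omega_i}$ forces each $\Delta_i = 0$, so $|D(\omega)| = |s_{\omega_1}\cdots s_{\omega_k}|\,|L| = s_{\omega_1,1,0}\cdots s_{\omega_k,1,0}\,|L|$. For (ii) with $L > 0$: non-negativity of each $s_{\omega_j}$ makes every prefactor $s_{\omega_1}\cdots s_{\omega_{i-1}}\geq 0$, concavity of $q_{\omega_i}$ gives $\Delta_i \geq 0$, and therefore $D(\omega) \geq s_{\omega_1}\cdots s_{\omega_k}\, L = s_{\omega_1,2,0}\cdots s_{\omega_k,2,0}\, L > 0$. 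Case (iii) is the mirror image: with $L < 0$ and convex $q_{\omega_i}$ each $\Delta_i \leq 0$, so $D(\omega) \leq s_{\omega_1}\cdots s_{\omega_k}\, L$, and taking absolute values gives the required bound.

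The only step requiring genuine care is the bookkeeping in the closed-form expansion of $f^*(l_\omega(y))$: one must track the empty-product conventions at $i = 1$ and $i = k$, and verify that the three points plugged into each $q_{\omega_i}$ really do form a convex triple in $K$ so that affineness, concavity, or convexity can be applied termwise. Once that identity is in place, the three cases collapse to one-line sign arguments.
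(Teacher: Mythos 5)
Your proposal is correct and follows essentially the same route as the paper: both track the convex-combination defect of $f^*$ at the three points $l_\omega(y_1),l_\omega(y_2),l_\omega(y_3)$ through the functional equation (\ref{2101e1}), using that the similarities $l_i$ are affine (so the relation $y_3=(1-\lambda)y_1+\lambda y_2$ is preserved) and that affine/concave/convex $q_{\omega_i}$ contribute defects of known sign. The only cosmetic difference is that you unroll the induction into an explicit closed-form sum and then argue termwise, whereas the paper propagates the defect inequality recursively one composition at a time; the mathematical content is identical.
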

\begin{proof}
	Let us suppose that  $L>0$.\\	   	
	Since $l_i$'s are similarities on $\mathbb{R}^m$, they are affine maps.\\
	Thus, for $\omega \in \mathcal{N}^k,k\in \mathbb{N}$, we get
	\begin{equation}\label{1701e2}
		l_{\omega}(y_3)=(1-\lambda)l_{\omega}(y_1)+\lambda l_{\omega}(y_2).
	\end{equation}
	For $i \in \mathcal{N}$ such that $s_{i}$  is a non-negative constant and $q_{i}$ is concave, we get
	\begin{align}\label{81eq1}
		\nonumber& g_{i}(y_3,p(y_3))- ((1-\lambda)g_{i}(y_1,p(y_1))+\lambda g_{i}(y_2,p(y_2)))\\
		&~~~= s_i L+q_{i}((1-\lambda)y_1+\lambda y_2)-((1-\lambda)q_{i}(y_1)+\lambda q_{i}(y_2))\geq s_i L.
	\end{align}
	For $i,j\in \mathcal{N}$ such that $s_{i},s_j$ are non-negative constants and  $q_{i},q_{j}$ are concave, we get
	\begin{align*}
		& g_{i}(f_{j}(y_3,p(y_3)))- ((1-\lambda) g_{i}(f_{j}(y_1,p(y_1)))+\lambda  g_{i}(f_{j}(y_2,p(y_2))))\\ 
		&~\overset{(\ref{1701e2})}{=}s_{i}(g_{j}(y_3,p(y_3))- ((1-\lambda)g_{j}(y_1,p(y_1))+\lambda g_{j}(y_2,p(y_2))))\\
		&~\;\;+q_{i}((1-\lambda)l_{j}(y_1)+\lambda l_j(y_2))-((1-\lambda)q_{i}(l_{j}(y_1))+\lambda q_{i}(l_{j}(y_2)))\\
		&~\;\overset{(\ref{81eq1})}{\geq} s_is_jL.
	\end{align*}
	Via the mathematical induction method, for $i\in \mathcal{N}$ and $\omega \in \mathcal{N}^{k-1},k-1\in \mathbb{N}$, we get
	\begin{align}\label{1701e3}
		\nonumber	&|g_{i}(f_{\omega}(y_3,p(y_3)))- ((1-\lambda) g_{i}(f_{\omega}(y_1,p(y_1)))+\lambda  g_{i}(f_{\omega}(y_2,p(y_2))))|\\
		&	\;\geq s_{i,2,0}s_{\omega_1,2,0}s_{\omega_2,2,0}\dots s_{\omega_k,2,0}L.
	\end{align}
	Since $f^*$ is a continuous function passing through $f_{\omega}(y_1,p(y_1)),f_{\omega}(y_2,p(y_2))$ and $f_{\omega}(y_2,p(y_2))$, and by using (\ref{1701e2}) and (\ref{1701e3}), we get  $G_{f^*,\omega}$  must cover a set of height  $s_{\omega_1,2,0}s_{\omega_2,2,0}\dots s_{\omega_k,2,0}L$, for  $\omega\in \mathcal{N}^k, k\in \mathbb{N}$.\\
	In a similar way, we can prove the other cases also. 
\end{proof}
\begin{lemma}\label{1701l12}
	Let us consider $l_i(x)=(l_{1i_1}(x_1),l_{2i_2}(x_2),\dots,l_{mi_m}(x_m))$ for $x\in \mathbb{R}^m$ and $i\in \mathcal{N}$, where $l_{ui_u}:\mathbb{R}\to \mathbb{R}$ are affine maps for $u\in \{1,2,\dots,m\}$. 	Assume that there exist $y_1=(t_{1},t_2,\dots,t_{r-1},t_{r_1},t_{r+1},\dots,t_{m}),  y_2=(t_{1},t_2,\dots,t_{r-1},t_{r_2},t_{r+1},\dots,t_{m}),$ $y_3=(t_{1},t_2,\dots,t_{r-1},t_{r_3},t_{r+1},\dots,t_{m})\in V$ for some $r\in \{1,2,\dots,m\}$, and $\lambda\in (0,1)$ such that  $y_3=(1-\lambda)y_1+\lambda y_2$ and  $L=p(y_3)-((1-\lambda)p(y_1)+\lambda p(y_2))\neq 0$. 	Then for $\omega\in \mathcal{N}^k,k\in \mathbb{N}$, we get
	\begin{itemize}
		\item[(i)] $G_{f^*,\omega}$  must cover  a  set of height  $s_{\omega_1,1,r}s_{\omega_2,1,r}\dots s_{\omega_k,1,r}|L|$;
		\item[(ii)] $G_{f^*,\omega}$  must cover  a set of height  $s_{\omega_1,2,r}s_{\omega_2,2,r}\dots s_{\omega_k,2,r}L$ if  $L>0$;  
		\item[(iii)] $G_{f^*,\omega}$  must cover  a  set of height  $s_{\omega_1,3,r}s_{\omega_2,3,r}\dots s_{\omega_k,3,r}|L|$ if $L<0$.
	\end{itemize}
\end{lemma}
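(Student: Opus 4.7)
The plan is to mimic the proof of Lemma \ref{1701l1} almost verbatim, but to exploit the product structure $l_i(x) = (l_{1i_1}(x_1), \dots, l_{mi_m}(x_m))$ in order to reduce the convexity/concavity hypothesis on $q_i$ to a one-dimensional condition along the $r$-th coordinate axis only. The whole proof rests on one preparatory fact: for every $\omega \in \mathcal{N}^k$, the three images $l_\omega(y_1), l_\omega(y_2), l_\omega(y_3)$ still differ only in their $r$-th coordinate, and moreover $l_\omega(y_3) = (1-\lambda)l_\omega(y_1) + \lambda l_\omega(y_2)$. This follows by induction on $k$: since each $l_j$ acts coordinate-wise via the affine maps $l_{uj_u}$, it preserves both the convex combination along every axis and the property that only the $r$-th coordinate varies among the three inputs.

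Focusing on case (ii) (the other two being parallel), I would then establish the one-step analogue of (\ref{81eq1}). For $i \in \mathcal{N}$ with $s_i$ a non-negative constant and $q_i$ concave in the $r$-th coordinate,
\begin{align*}
& g_i(y_3,p(y_3)) - \bigl((1-\lambda)g_i(y_1,p(y_1)) + \lambda g_i(y_2,p(y_2))\bigr) \\
& \qquad = s_i L + q_i(y_3) - \bigl((1-\lambda)q_i(y_1) + \lambda q_i(y_2)\bigr) \geq s_{i,2,r}\, L,
\end{align*}
where the concavity step is legal precisely because $y_1, y_2, y_3$ agree in every coordinate except the $r$-th, so the inequality collapses to the ordinary one-variable concavity of $q_i$ along that axis; if $s_{i,2,r} = 0$ the bound is trivial. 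For $i \in \mathcal{N}$ whose $s_i$ or $q_i$ do not meet the concavity-in-$r$ requirement, the coefficient $s_{i,2,r}$ is zero by definition, so the induction hypothesis contributes nothing and no inequality on $q_i$ is needed.

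Iterating this one-step bound via mathematical induction on $|\omega|$, and using the preparatory fact at each stage to keep the relevant three points aligned along the $r$-th axis, I would obtain
\begin{equation*}
\bigl| g_i(f_\omega(y_3,p(y_3))) - (1-\lambda) g_i(f_\omega(y_1,p(y_1))) - \lambda g_i(f_\omega(y_2,p(y_2))) \bigr| \geq s_{i,2,r} s_{\omega_1,2,r}\cdots s_{\omega_{k-1},2,r}\, L,
\end{equation*}
exactly mirroring (\ref{1701e3}) in the proof of Lemma \ref{1701l1}. Since $f^*$ is continuous and its graph passes through the three points $f_\omega(y_j,p(y_j))$, $G_{f^*,(i,\omega)}$ must cover a vertical set of at least the claimed height. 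Parts (i) and (iii) follow by replacing concavity-in-$r$ by affinity-in-$r$ or convexity-in-$r$ and tracking $s_{\omega_j,1,r}$ or $s_{\omega_j,3,r}$ respectively. The only conceptual subtlety — not really an obstacle — is verifying that partial concavity/convexity (in one coordinate only) propagates through compositions with $l_\omega$; this is handled completely by the preparatory coordinate-wise affinity observation.
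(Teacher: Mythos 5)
Your proposal is correct and matches the paper's intent: the paper proves this lemma simply by noting it follows from ``similar arguments'' to Lemma \ref{1701l1}, and your argument is exactly that adaptation, with the one genuinely new ingredient --- that the coordinate-wise affine maps $l_{ui_u}$ keep the three points aligned along the $r$-th axis and preserve the convex combination, so concavity/convexity in the $r$-th coordinate alone suffices --- correctly identified and justified.
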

\begin{proof}
	Using similar arguments of Lemma \ref{1701l1}, we can prove this result. 
\end{proof}
\begin{theorem}\label{2301t1}
	Let $f^*$ be a multivariate FIF derived from Case \ref{1324ca2}.   Assume that the interpolation points are not collinear, i.e., 
	there exist $r \in \{1,2,\dots,m\},y_1,y_2,y_3\in V$ and  $L\neq 0$ as in the framework of Lemma \ref{1701l12}. If:
	\begin{itemize}
		\item[(i)]  $\gamma_{1,r}\neq 0$, then
		$$1+\log_{\Lambda_0} (\gamma_{1,r}) \leq \underline{\dim}_B\left(G_{f^*}\right);$$
		\item[(ii)] $L>0$ and $\gamma_{2,r}\neq0$, then 
		$$1+\log_{\Lambda_0} (\gamma_{2,r}) \leq \underline{\dim}_B\left(G_{f^*}\right);$$
		\item[(iii)]   $L<0$ and $\gamma_{3,r}\neq0$, then 
		$$1+\log_{\Lambda_0} (\gamma_{3,r}) \leq \underline{\dim}_B\left(G_{f^*}\right).$$
	\end{itemize}
\end{theorem}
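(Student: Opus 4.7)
The strategy is to bound $N_\delta(G_{f^*})$ from below by summing, over all $\omega\in\mathcal{N}^k$ at a fixed depth $k$, the minimum number of $\delta$-boxes required to cover each piece $G_{f^*,\omega}$, and then pass to the liminf. I prove (i); parts (ii) and (iii) are identical after substituting the appropriate item of Lemma \ref{1701l12} and the corresponding $\gamma_{j,r}$.

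Fix $k\in\mathbb{N}$. For each $\omega\in\mathcal{N}^k$, Lemma \ref{1701l12}(i) provides
$$h_\omega:=|L|\prod_{j=1}^{k}s_{\omega_j,1,r}$$
as a lower bound for the vertical extent of $G_{f^*,\omega}$. Parametrising the axis-parallel segment from $l_\omega(y_1)$ to $l_\omega(y_2)$ (which lies in $l_\omega(K)$ since $l_\omega$ is a product of affine maps) and applying the intermediate value theorem to $f^*$ along this segment using the three known values $f^*(l_\omega(y_j))$ forces the $z$-range of $f^*$ on the segment to have length at least $h_\omega$. Since a $\delta$-box has $z$-extent $\delta$, any cover of $G_{f^*,\omega}$ uses at least $h_\omega/\delta$ boxes.

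Next invoke the tiling structure of Case \ref{1324ca2}: the cells $\{l_\omega(K)\}_{\omega\in\mathcal{N}^k}$ have pairwise disjoint interiors and form an axis-aligned grid in $K$. If $\delta$ is smaller than the shortest side of every cell at depth $k$, then any spatial $\delta$-cube meets at most $2^m$ cells, so
$$2^m\cdot N_\delta(G_{f^*})\;\geq\;\sum_{\omega\in\mathcal{N}^k}N_\delta(G_{f^*,\omega})\;\geq\;\frac{1}{\delta}\sum_{\omega\in\mathcal{N}^k}h_\omega\;=\;\frac{|L|}{\delta}\,\gamma_{1,r}^{\,k},$$
where the last equality is the multinomial identity $\sum_{\omega\in\mathcal{N}^k}\prod_{j=1}^{k}s_{\omega_j,1,r}=\bigl(\sum_{i\in\mathcal{N}}s_{i,1,r}\bigr)^{k}=\gamma_{1,r}^{\,k}$. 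Choosing $\delta_k=c\,\Lambda_0^{-k}$ with $c$ small enough for the side-length hypothesis to hold uniformly in $k$, we obtain $N_{\delta_k}(G_{f^*})\geq C_0(\Lambda_0\gamma_{1,r})^k$, and taking logarithms and dividing by $\log(1/\delta_k)=k\log\Lambda_0+O(1)$ produces the bound $1+\log_{\Lambda_0}\gamma_{1,r}$. Monotonicity of $N_\delta$ in $\delta$, together with the bounded ratio $\delta_k/\delta_{k+1}=\Lambda_0$, extends the lower estimate from the geometric sequence $\delta_k$ to arbitrary $\delta\to 0^+$, yielding $\underline{\dim}_B(G_{f^*})\geq 1+\log_{\Lambda_0}\gamma_{1,r}$.

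The main obstacle is the counting step $\sum_\omega N_\delta(G_{f^*,\omega})\leq 2^m\,N_\delta(G_{f^*})$: it demands that the cells at depth $k$ be resolved by the scale $\delta$, which is precisely why the exponent involves $\Lambda_0$ (the minimum contractivity) rather than $\Lambda$. A minor point is that many $\omega$ may contribute $h_\omega=0$ (whenever some $\omega_j$ fails the constancy/affineness hypothesis on $s_{\omega_j},q_{\omega_j}$), but these simply drop out of the multinomial sum without affecting the resulting estimate.
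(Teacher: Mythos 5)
Your proof is correct and follows essentially the same route as the paper: invoke Lemma \ref{1701l12} to bound the vertical extent of each piece $G_{f^*,\omega}$ from below by $|L|\prod_j s_{\omega_j,1,r}$, sum over $\omega\in\mathcal{N}^k$ via the multinomial identity to get $N_\delta(G_{f^*})\gtrsim \Lambda_0^k\gamma_{1,r}^k$ at scale $\delta\asymp\Lambda_0^{-k}$, and pass to the liminf. The only difference is that you make explicit two points the paper leaves implicit (the bounded-overlap factor $2^m$ when summing box counts over cells, and the reduction of the liminf to the geometric sequence $\delta_k$), which is a welcome refinement rather than a change of method.
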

\begin{proof}
	Let ${N}_0(k)$ denote the minimum number of cubes of side length $\frac{|K|_0}{\Lambda_0^{k}}$ that covers $G_{f^*}$, where $|K|_0$ is the minimum of the side lengths of $K$. \\
	Let us suppose that $\gamma_{1,r}\neq 0$.\\
	From Lemma \ref{1701l12}, we get $G_{f^*,\omega}$ must cover  a cuboid of size $\frac{|K|_0}{\Lambda_0^k}\times \dots\frac{|K|_0}{\Lambda_0^k}\times s_{\omega_1,1,r}s_{\omega_2,1,r}\dots s_{\omega_k,1,r}|L|$, for $\omega \in \mathcal{N}^k,k\in \mathbb{N}$.\\
	Thus
	\begin{equation}\label{2301e2}
		{N}_0(k)\geq \underset{\omega \in\mathcal{N}^k}{\sum} \frac{\Lambda_0^k}{|K|_0}s_{\omega_1,1,r}s_{\omega_2,1,r}\dots s_{\omega_k,1,r}|L|=\frac{\Lambda_0^k}{|K|_0}\gamma_{1,r}^k|L|~~~\text{for}~  k\in \mathbb{N}.
	\end{equation}
	Therefore
	\begin{align*}
		\underline{\dim}_B(G_{f^*})= \liminf_{k \to \infty}\frac{\log {N}_0(k) }{\log \left(\frac{\Lambda_0^{k}}{|K|_0}\right)}\geq1+\frac{\log \gamma_{1,r}}{\log \Lambda_0}.
	\end{align*}
	A similar argument works for other cases as well.
\end{proof}
\begin{corollary}\label{3001cor1}
	Let  $f^*$ be a multivariate FIF derived from Case \ref{1324ca2}, $q_{i}\in \mathcal{C}^{\eta}(K)$ and $s_{i}$ be a constant for $i \in \mathcal{N}$. Assume that the interpolation points are not collinear (i.e., there exists $r\in \{1,2,\dots,m\}$ as in Lemma \ref{1701l12})  
	\begin{itemize}
		\item[$-$] and either  $q_{i}$ is affine with respect to the $r^{\text{th}}$ co-ordinate for $i \in \mathcal{N}$\\
		or
		\item[$-$] $L>0$, $q_{i}$ is concave with respect to the $r^{\text{th}}$ co-ordinate  and $s_{i}\geq 0$ for $i \in \mathcal{N}$\\
		or
		\item[$-$]  $L<0$,  $q_{i}$ is convex  with respect to the $r^{\text{th}}$ co-ordinate  and $s_{i}\geq 0$ for $i \in \mathcal{N}$.
	\end{itemize}
	If:
	\begin{itemize}
		\item [(i)]   $0<\gamma\leq \frac{N}{\Lambda^{\eta'}},$ then $$1+\log_{\Lambda_0}\gamma\leq  \underline{\dim}_B\left(G_{f^*}\right)\leq \overline{\dim}_B\left(G_{f^*}\right)\leq 1-\eta'+\log_{\Lambda}N;$$
		\item[(ii)]  $\gamma> \frac{N}{\Lambda^{\eta'}},$ then $$1+\log_{\Lambda_0}\gamma\leq  \underline{\dim}_B\left(G_{f^*}\right)\leq \overline{\dim}_B\left(G_{f^*}\right)\leq 1+\log_{\Lambda} \gamma;$$
		\item[(iii)] $n_u=n$ and $\{x_{ui_u}\}_{i_u=0}^{n_u}$ are equally spaced points for $u\in \{1,2,\dots,m\}$, then
		$$\dim_B\left(G_{f^*}\right)=\begin{cases}
			1+\frac{\log \gamma}{\log n},  	&\quad\text{if~} 	\gamma> n^{m-\eta'},\\
			m, &\quad\text{if}~ \gamma\leq n^{m-1} ~\text{and}~ \eta'=1.
		\end{cases}$$
	\end{itemize}
\end{corollary}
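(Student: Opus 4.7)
The plan is to assemble the corollary from the two main theorems of Section \ref{2401s4}, namely the upper bound Theorem \ref{912thm1} and the lower bound Theorem \ref{2301t1}, and to invoke the elementary inequality $\dim_H(K)\le\dim_H(G_{f^*})$ from the Hausdorff--box dimension remark to close the terminal cube case of part (iii). The only real bookkeeping will be to check that the hypotheses of these theorems are matched with the right constants.

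First, since each $s_i$ is constant, we have $s_i\in\mathcal{C}^{\eta}(K)$ trivially (with $[s_i]_{\eta}=0$), while $q_i\in\mathcal{C}^{\eta}(K)$ is an assumption of the corollary. Theorem \ref{912thm1} therefore applies and yields the two upper bounds directly: $\overline{\dim}_B(G_{f^*})\le 1-\eta'+\log_\Lambda N$ when $\gamma\le N/\Lambda^{\eta'}$, and $\overline{\dim}_B(G_{f^*})\le 1+\log_\Lambda\gamma$ otherwise. This disposes of the right-hand inequalities in (i) and (ii).

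For the lower bounds in (i) and (ii), the task is to identify the coefficient $\gamma_{j,r}$ appearing in Theorem \ref{2301t1} with $\gamma=\sum_{i\in\mathcal{N}}\|s_i\|_\infty$ under each of the three bulleted structural hypotheses. If $q_i$ is affine with respect to the $r$-th coordinate and $s_i$ is constant, then by the definition of $s_{i,1,r}$ we have $s_{i,1,r}=|s_i|=\|s_i\|_\infty$, so $\gamma_{1,r}=\gamma$; Theorem \ref{2301t1}(i) then yields $1+\log_{\Lambda_0}\gamma\le\underline{\dim}_B(G_{f^*})$. If instead $L>0$, $q_i$ is concave in the $r$-th coordinate and $s_i\ge 0$ is constant, then $s_{i,2,r}=s_i=\|s_i\|_\infty$, so $\gamma_{2,r}=\gamma$ and Theorem \ref{2301t1}(ii) gives the same lower bound; the convex/$L<0$ bullet is symmetric via Theorem \ref{2301t1}(iii). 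Combining with the first step finishes (i) and (ii).

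For part (iii), the hypothesis $n_u=n$ with equally spaced nodes forces every $l_i$ to be a similarity of ratio $1/n$, whence $\Lambda=\Lambda_0=n$ and $N=n^m$. In the regime $\gamma>n^{m-\eta'}$, the bounds from (ii) collapse to the common value $1+\log_n\gamma$, which pins down $\dim_B(G_{f^*})$. In the regime $\gamma\le n^{m-1}$ with $\eta'=1$, part (i) yields the upper bound $1-\eta'+\log_\Lambda N=m$, while $K$ is the full $m$-dimensional cuboid, so $\dim_H(K)=m$; the chain $m=\dim_H(K)\le\dim_H(G_{f^*})\le\underline{\dim}_B(G_{f^*})$ from the Hausdorff--box dimension remark then supplies the matching lower bound $m$. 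The only step requiring care is the identification of the various $s_{i,j,r}$ with $|s_i|$ under the bulleted hypotheses; once that bookkeeping is done, the corollary is immediate assembly, and no step should pose a genuine obstacle.
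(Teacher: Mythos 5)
The paper states this corollary without proof, leaving it as an immediate consequence of Theorems \ref{912thm1} and \ref{2301t1}, and your assembly is exactly that intended argument: constancy of $s_i$ gives $[s_i]_\eta=0$ so Theorem \ref{912thm1} supplies the upper bounds, the identifications $\gamma_{1,r}=\gamma$ (resp.\ $\gamma_{2,r}=\gamma$, $\gamma_{3,r}=\gamma$) under the three bullets feed Theorem \ref{2301t1} for the lower bounds, and in part (iii) the equal spacing forces $\Lambda_0=\Lambda=n$, $N=n^m$ so the two bounds coincide, with the terminal case closed by $m=\dim_H(K)\le\dim_H(G_{f^*})\le\underline{\dim}_B(G_{f^*})$. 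Your proof is correct and complete.
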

\begin{theorem}\label{2101t1}
	Let $n\in \mathbb{N}$ and  $f^*$ be a FIF on SG with respect to the data $p$ on $V=V_n$ as in Remark \ref{13324r1}. Assume that there exist $y_1,y_2,y_3\in V$ and  $L\neq 0$ as in the framework of Lemma \ref{1701l1}. If:
	\begin{itemize}
		\item[(i)] $\gamma_{1,0}\neq0$, then
		$$1+\log_{2^n} \gamma_{1,0} \leq \underline{\dim}_B\left(G_{f^*}\right);$$
		\item[(ii)]  $L>0$  and $\gamma_{2,0}\neq0$, then 
		$$1+\log_{2^n} \gamma_{2,0} \leq \underline{\dim}_B\left(G_{f^*}\right);$$
		\item[(iii)]  $L<0$  and $\gamma_{3,0}\neq0$, then
		$$1+\log_{2^n} \gamma_{3,0} \leq \underline{\dim}_B\left(G_{f^*}\right);$$
		\item[(iv)] $s_{i},q_{i}\in \mathcal{C}^{\eta}(\text{SG})$  for $i \in \mathcal{N}$ and either $\gamma_{1,0}=\gamma$ (or) $L>0$  and $\gamma_{2,0}=\gamma$ (or) $L<0$  and $\gamma_{3,0}=\gamma$, then		
		$$\dim_B\left(G_{f^*}\right)=\begin{cases}
			1+\frac{\log \gamma}{\log 2^n}, &\quad 	\text{if~} \gamma>\left(\frac{3}{2^{\eta'}}\right)^n,\\
			\frac{\log 3}{\log 2}, &\quad \text{if~} 	\gamma\leq \left(\frac{3}{2}\right)^n \text{~and~} \eta'=1.
		\end{cases}$$
	\end{itemize}
\end{theorem}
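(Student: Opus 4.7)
The plan is to reduce items (i)--(iii) to direct applications of Lemma \ref{1701l1} combined with a box-counting argument identical to the one in the proof of Theorem \ref{2301t1}, and then to obtain (iv) by matching these lower bounds against the upper bound from Theorem \ref{912thm1}. First, via Remark \ref{13324r1} I view SG as the attractor of the refined IFS $\{l_\omega\}_{\omega\in\mathcal{N}^n}$ consisting of $3^n$ similarities of $\mathbb{R}^2$, each with contractivity factor $2^{-n}$. In the notation of Section \ref{2401s4} this amounts to $N=3^n$, $\Lambda=\Lambda_0=2^n$, and $\log_\Lambda N=\log_2 3$, so that the hypotheses of the previous theorems translate directly to this setting.

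For item (i), Lemma \ref{1701l1}(i) furnishes, for every word $\omega\in\mathcal{N}^k$, a vertical segment of height $s_{\omega_1,1,0}s_{\omega_2,1,0}\cdots s_{\omega_k,1,0}|L|$ that must be covered by $G_{f^*,\omega}$. Choosing the scale $\delta_k=|K|_0/2^{nk}$ with $|K|_0$ a convenient upper bound on the diameter of SG (so that each $l_\omega(\text{SG})$ projects into essentially one horizontal column of the $\delta_k$-grid), I would count, exactly as in the proof of Theorem \ref{2301t1},
\[
N_{\delta_k}(G_{f^*})\;\geq\;\sum_{\omega\in\mathcal{N}^k}\frac{s_{\omega_1,1,0}\cdots s_{\omega_k,1,0}|L|}{\delta_k}\;=\;\frac{2^{nk}}{|K|_0}\,\gamma_{1,0}^{k}\,|L|,
\]
and then pass to $\liminf_{k\to\infty}$ to conclude $\underline{\dim}_B(G_{f^*})\geq 1+\log_{2^n}\gamma_{1,0}$. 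Items (ii) and (iii) would be proved in exactly the same way, invoking the corresponding parts of Lemma \ref{1701l1} and the sign hypothesis on $L$.

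For item (iv), the additional assumption $s_i,q_i\in\mathcal{C}^\eta(\text{SG})$ activates Theorem \ref{912thm1} with $N=3^n$ and $\Lambda=2^n$, which yields
\[
\overline{\dim}_B(G_{f^*})\;\leq\;\begin{cases} 1-\eta'+\log_2 3, & \gamma\leq(3/2^{\eta'})^n,\\ 1+\log_{2^n}\gamma, & \gamma>(3/2^{\eta'})^n.\end{cases}
\]
When $\gamma>(3/2^{\eta'})^n$, the relevant one of (i)--(iii) (selected by which $\gamma_{j,0}$ equals $\gamma$) supplies the matching lower bound $1+\log_{2^n}\gamma$, so equality holds. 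When $\gamma\leq(3/2)^n$ and $\eta'=1$, the upper bound collapses to $\log_2 3$, while the general inequality $\dim_H(K)\leq\dim_B(G_{f^*})$ recorded in Theorem \ref{912thm1} together with the classical value $\dim_H(\text{SG})=\log_2 3$ pins the lower bound at $\log_2 3$ too. The step that will require the most care is the lower-bound counting: I must verify that, because of the p.c.f. structure of SG (so that distinct pieces $l_\omega(\text{SG})$ meet only on a finite boundary set), the vertical-height contributions of different words $\omega$ genuinely add up inside the $\delta_k$-grid instead of being miscounted on shared columns. Beyond this bookkeeping point, the argument is a direct synthesis of Lemma \ref{1701l1}, Theorem \ref{2301t1} and Theorem \ref{912thm1}.
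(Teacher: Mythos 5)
Your proposal is correct and follows essentially the same route as the paper: reduce to the refined IFS with $N=3^n$, $\Lambda_0=\Lambda=2^n$, sum the cell heights from Lemma \ref{1701l1} to get the lower bounds in (i)--(iii), and match against Theorem \ref{912thm1} (together with $\dim_H(\mathrm{SG})=\log_2 3$) for (iv). The bookkeeping point you flag is exactly what the paper handles by counting coverings by triangular prisms $T\times[z,z+|K|/2^{nk}]$ adapted to the cells $l_\omega(\mathrm{SG})$ and then using $N_{|K|/2^{nk}}(G_{f^*})\le N_{\mathrm{S}}(k)\le 3N_{|K|/2^{nk}}(G_{f^*})$ to pass to axis-parallel boxes.
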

\begin{proof}
	From the assumption,	 we get $N=3^n,r_{\nu}=\frac{1}{2^n}$ for  $\nu\in \mathcal{N}=\{1,2,3\}^n$ and $\Lambda_0=\Lambda=2^n$.\\
	Let ${N}_{\text{S}}(k)$ and ${N}_{\text{S}}(k,\omega)$ denote the minimum number  of sets belonging to the family
	$$\left\{T\times \left[z,z+\frac{|K|}{2^{nk}}\right]: T ~\text{is an equilateral triangle of side length~} \frac{|K|}{2^{nk}} \right\},$$
	which  covers $G_{f^*}$ and $G_{f^*,\omega}$ respectively. \vspace{1.5mm}\\
	Let us suppose that $\gamma_{1,0}\neq0$.\\	
	From Lemma  \ref{1701l1}, for $k\in \mathbb{N},$ we get   	
	\begin{equation}\label{7324e1}
		{N}_{\text{S}}(k)=\underset{\omega\in \mathcal{N}^k}{\sum}{N}_{\text{S}}(k,\omega)\geq \underset{\omega\in \mathcal{N}^k}{\sum} \frac{2^{nk}}{|K|}s_{\omega_1,1,0}s_{\omega_2,1,0}\dots s_{\omega_k,1,0}|L|=\frac{2^{nk}\gamma_{1,0}^k |L|}{|K|}.
	\end{equation}
	Since 	$${N}_{ \frac{|K|}{2^{nk}}}(G_{f^*})\leq {N}_{\text{S}}(k) \leq 3{N}_{ \frac{|K|}{2^{nk}}}(G_{f^*})~~\text{for}~k\in \mathbb{N},$$
	we have 
	\begin{align*}
		\underline{\dim}_B(G_{f^*})=\liminf_{k \to \infty}\frac{\log {N}_{\text{S}}(k) }{\log \left(\frac{2^{nk}}{|K|}\right)}\overset{(\ref{7324e1})}{\geq} 1+\log_{2^n} \gamma_{1,0}.
	\end{align*}		
	In a similar way, we can  prove the  other cases.
\end{proof}	

\begin{remark}By using Lemma \ref{1701l1} and \ref{1701l12}, in a similar way,  we can estimate non-trivial lower bounds of the box dimension of FIFs  on many different domains such as triangle,  Sierpi\'nski sickle etc.\end{remark}

\begin{theorem}	\label{912thm111}
	Let  $f^*$ be a  FIF on an interval derived from Case \ref{1324ca2},  $s_{i},q_{i}\in \mathcal{C}^{\eta}(K)$ for $i \in \mathcal{N}$ and $\{x_{1i_1}\}_{i_1=0}^{n_1}$ be a equally spaced points collection.
	If   $\gamma_0>N^{1-\eta}$ and $\underset{r\to \infty}{\lim}\frac{N(r)}{\left(N^{2-\eta}\right)^r}=\infty$, then
	$$1+\log_N (\gamma_0) \leq \underline{\dim}_B\left(G_{f^*}\right)\leq \overline{\dim}_B\left(G_{f^*}\right)\leq 1+\log_N (\gamma),$$
	where $\gamma_0:=\sum_{i\in \mathcal{N}}\|s_i\|_{0}$ and $\|s_i\|_{0}=\inf\{|s_i(x)|:x\in K\}$.
\end{theorem}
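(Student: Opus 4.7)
The plan is to handle the upper and lower bounds separately. For the upper bound, on an interval with equally spaced nodes we have $\Lambda = \Lambda_0 = N$ and $\eta' = \eta$, and the hypothesis $\gamma \geq \gamma_0 > N^{1-\eta} = N/\Lambda^{\eta'}$ places us in Case (ii) of Theorem \ref{912thm1}, which gives $\overline{\dim}_B(G_{f^*}) \leq 1 + \log_N \gamma$ immediately.

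For the lower bound I would mirror the recursion behind the upper-bound proof, but running it in reverse and with $\|s_i\|_\infty$ replaced by $\|s_i\|_0$. Starting from (\ref{2101e1}) and the splitting $s_i(x)f^*(x) - s_i(x')f^*(x') = s_i(x)(f^*(x) - f^*(x')) + (s_i(x) - s_i(x'))f^*(x')$, evaluated at a pair $x_*, x'_* \in l_\omega(K)$ realizing $\text{Osc}_\omega(f^*)$, the reverse triangle inequality yields
\[
\text{Osc}_{(i,\omega)}(f^*) \geq \|s_i\|_0\, \text{Osc}_\omega(f^*) - \|f^*\|_\infty\, \text{Osc}_\omega(s_i) - \text{Osc}_\omega(q_i).
\]
Summing over $i \in \mathcal{N}$ and $\omega \in \mathcal{N}^k$, and using $s_i, q_i \in \mathcal{C}^\eta(K)$ with $\Lambda = N$, produces the one-step estimate
\[
\text{Osc}(k+1, f^*) \geq \gamma_0\, \text{Osc}(k, f^*) - C N^{k(1-\eta)},
\]
with $C = \|f^*\|_\infty \sum_i [s_i]_\eta + \sum_i [q_i]_\eta$. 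Iterating from a base level $k_0$ up to $r$ and bounding the resulting finite geometric series (whose ratio $\gamma_0/N^{1-\eta}$ exceeds $1$, so its final term dominates), I arrive at
\[
\text{Osc}(r, f^*) \geq \gamma_0^{r-k_0}\!\left(\text{Osc}(k_0, f^*) - \frac{C N^{(k_0-1)(1-\eta)}}{\gamma_0/N^{1-\eta} - 1}\right).
\]

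The main obstacle is showing that the parenthesized factor can be made positive by choosing $k_0$ suitably; this is exactly where the hypothesis $N(r)/N^{(2-\eta)r} \to \infty$ is essential. From the elementary cell-by-cell covering estimate $N(r) \leq N^r + \text{Osc}(r, f^*) N^r/|K|$, this hypothesis forces $\text{Osc}(r, f^*)/N^{r(1-\eta)} \to \infty$, so $k_0$ can be chosen so large that the bracket is bounded below by a positive constant. Consequently $\text{Osc}(r, f^*) \geq c\, \gamma_0^r$ for all large $r$, and the matching lower bound $N(r) \geq \text{Osc}(r, f^*) N^r/|K|$ yields $N(r) \geq c'(N\gamma_0)^r$, whence $\underline{\dim}_B(G_{f^*}) \geq 1 + \log_N \gamma_0$.
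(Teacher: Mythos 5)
Your proof is correct and follows essentially the same route as the paper's: the paper derives the identical linear recursion, in the form ${N}(k+1)\geq N\gamma_0 {N}(k)-C'N^{k(2-\eta)}$ on covering numbers, from the same reverse-triangle-inequality estimate applied to the self-referential equation (\ref{2101e1}), and uses the hypothesis $\lim_{r\to\infty}{N}(r)/(N^{2-\eta})^{r}=\infty$ in exactly the same way to seed the iteration with a positive bracket at some level $r'$. Your only deviation is bookkeeping: you run the recursion on $\text{Osc}(k,f^*)$ and convert to covering numbers at both ends via the standard column estimates, which is equivalent to the paper's argument up to the factor $N^{k}/|K|$ and harmlessly avoids the paper's ``$\,{N}(k,\omega)-2\,$'' correction term.
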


\begin{proof}
	From assumption, we have $m=1, r_i=\frac{1}{N}$ for $i\in \mathcal{N}$ and $\Lambda_0=\Lambda=N$.	From Theorem \ref{912thm1}, we get the upper bound of $\overline{\dim}_B\left(G_{f^*}\right)$.\\
	For $i\in \mathcal{N},\omega\in \mathcal{N}^k,k\in \mathbb{N}$ and $x,x'\in l_{\omega}(K)$, we have
	\begin{align*}
		&|f^*(l_{i}(x))-f^*(l_{i}(x'))|\\
		&\overset{(\ref{2101e1})}{\geq}\|s_{i}\|_{0}|f^*(x)-f^*(x')|-\|f^*\|_{\infty}|s_{i}(x)-s_{i}(x')|-|q_{i}(x)-q_{i}(x')|.
	\end{align*}	
	For $i\in \mathcal{N},\omega\in \mathcal{N}^k$ and $k\in \mathbb{N}$, we get	$G_{f^*,(i,\omega)}$ must cover at least  a rectangle of size  $\frac{|K|}{N^{k+1}}\times \frac{\|s_{i}\|_{0}({N}(k,\omega)-2)|K|}{N^k}- \|f^*\|_{\infty}	\text{Osc}_{\omega}(s_{i})-	\text{Osc}_{\omega}(q_{i})$.\\
	From (\ref{2424eq1}),  we have
	\begin{align*}
		&{N}(k+1)=\underset{i\in \mathcal{N}}{\sum}\underset{\omega\in \mathcal{N}^k}{\sum}{N}(k+1,(i,\omega))\\
		& ~~ \geq \underset{i\in \mathcal{N}}{\sum}\underset{\omega\in \mathcal{N}^k}{\sum}\frac{N^{k+1}}{|K|}\left( \frac{\|s_{i}\|_{0}({N}(k,\omega)-2)|K|}{N^k}- \|f^*\|_{\infty}	\text{Osc}_{\omega}(s_{i})-	\text{Osc}_{\omega}(q_{i})\right)\\
		& ~~ \geq N\gamma_0 ({N}(k)-2N^k)- \frac{N^{k+1}}{|K|}\underset{i\in \mathcal{N}}{\sum}\left(\|f^*\|_{\infty}	[s_{i}]_{\eta}+	[q_{i}]_{\eta}\right)N^{k\left(1-\eta\right)}\\
		& ~~ \geq N\gamma_0{N}(k)- N^{k(2-\eta)}C',
	\end{align*}
	for  $k\in \mathbb{N}$,	where $C'=\frac{N}{|K|}\underset{i\in \mathcal{N}}{\sum}\left(\|f^*\|_{\infty}	[s_{i}]_{\eta}+	[q_{i}]_{\eta}\right)+2N\gamma_0$.\\
	Via the mathematical induction method, we  get
	\begin{align*}
		&N(k)\geq  \left(N\gamma_0\right)^{k-r}N(r)-C'\left(\left(N\gamma_0\right)^{k-r-1}\left(N^{2-\eta}\right)^r+\left(N\gamma_0\right)^{k-r-2}\left(N^{2-\eta}\right)^{r+1}\right.\\
		&~~~~~~~~~\quad\left.+\cdots+\left(N^{2-\eta}\right)^{k-1}\right)\\
		&~~= \left(N\gamma_0\right)^{k-r}N(r)-C'\left(N\gamma_0\right)^{k-r-1}\left(N^{2-\eta}\right)^r\left(1+\frac{N^{1-\eta}}{\gamma_0}+\cdots+\left(\frac{N^{1-\eta}}{\gamma_0}\right)^{k-r-1}\right)\\
		&~~	\geq \left(N\gamma_0\right)^{k-r}N(r)-C'\left(N\gamma_0\right)^{k-r-1}\left(N^{2-\eta}\right)^r\frac{1}{1-\frac{N^{1-\eta}}{\gamma_0}}\\	
		&~~	= \left(N\gamma_0\right)^{k}\left(\frac{N^{2-\eta}}{N\gamma_0}\right)^r\left(\frac{N(r)}{\left(N^{2-\eta}\right)^r}-\frac{C'}{N\gamma_0\left(1-\frac{N^{1-\eta}}{\gamma_0}\right)}\right),
	\end{align*} 
	for $k>r$.\\
	By assumption, there exists $r'\in \mathbb{N}$ such that
	$$\frac{N(r')}{\left(N^{2-\eta}\right)^{r'}}-\frac{C'}{N\gamma_0\left(1-\frac{N^{1-\eta}}{\gamma_0}\right)}>0.$$
	Thus
	$$N(k)\geq \left(N\gamma_0\right)^{k}C'',$$
	for  $k>r'$, where $C''=\left(\frac{N^{1-\eta}}{\gamma_0}\right)^{r'}\left(\frac{N(r')}{\left(N^{2-\eta}\right)^{r'}}-\frac{C'}{N\gamma_0\left(1-\frac{N^{1-\eta}}{\gamma_0}\right)}\right)$.\\
	Hence
	\begin{align*}
		\underline{\dim}_B(G_{f^*})=\liminf_{k \to \infty}\frac{\log {N}(k) }{\log \left(\frac{N^{k}}{|K|}\right)}\geq \lim_{k \to \infty} \frac{\log \left(N\gamma_0\right)^{k} }{\log N^{k}}=1+\frac{\log \gamma_0}{\log N}.
	\end{align*}
\end{proof}

\begin{corollary}
	Let  $f^*$ be a  FIF on an interval,   $s_{i},q_{i}$  be continuous bounded variation maps for $i \in \mathcal{N}$,  $\{x_{1i_1}\}_{i_1=0}^{n_1}$ be a equally spaced points collection.		
	If the interpolation points are not collinear (i.e., there exists $L\neq 0$ as in Lemma \ref{1701l1}),  and  either $\gamma_{1,0}>1$ (or) $L>0$  and $\gamma_{2,0}>1$ (or) $L<0$  and $\gamma_{3,0}>1$,  
	then 
	$$1+\log_N (\gamma_0) \leq \underline{\dim}_B\left(G_{f^*}\right)\leq \overline{\dim}_B\left(G_{f^*}\right)\leq 1+\log_N (\gamma).$$
\end{corollary}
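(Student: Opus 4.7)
The plan is to deduce this corollary by verifying the hypotheses of Theorem \ref{912thm111}, with the oscillation exponent $\eta=1$. Since $m=1$ here, the remark following the definition of the oscillation space $\mathcal{C}^{\eta}(K)$ (``$\mathcal{C}^{\log_{\Lambda}N}(K)$ is the collection of continuous bounded variation functions'') tells us $\eta=1$ gives $\log_{\Lambda}N = \log_N N = 1$, and hence the assumed continuous bounded variation property of the $s_i$ and $q_i$ immediately yields $s_i,q_i\in\mathcal{C}^{1}(K)$. With this $\eta$, the exponent $N^{1-\eta}$ in Theorem \ref{912thm111} reduces to $1$, and $N^{2-\eta}$ reduces to $N$; so it suffices to show (a) $\gamma_0>1$ and (b) $\dfrac{N(r)}{N^r}\to\infty$ as $r\to\infty$.

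For (a), observe that whenever $s_i$ is constant (so that $s_{i,j,0}=|s_i|$ or $s_i$ appears in one of the sums), the quantity $\|s_i\|_0=\inf_{x\in K}|s_i(x)|$ equals $|s_i|$. Under any of the three branches of the hypothesis, the non-zero contributions to $\gamma_{j,0}$ come precisely from such constant-$s_i$ indices, so $\gamma_0=\sum_{i\in\mathcal N}\|s_i\|_0 \ge \gamma_{j,0}>1$, where $j\in\{1,2,3\}$ is the branch that applies.

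For (b), I would invoke Lemma \ref{1701l1}, which is exactly the tool that exploits the non-collinearity assumption on the interpolation points. In the single-variable setting, $\Lambda=\Lambda_0=N$ and $|K|_0=|K|$, and the proof scheme of Theorem \ref{2301t1} (which is itself built on Lemma \ref{1701l1}) yields that for each $k\in\mathbb{N}$,
\begin{equation*}
N(k)\;\ge\;\underset{\omega\in\mathcal{N}^k}{\sum}\frac{N^k}{|K|}\,s_{\omega_1,j,0}\,s_{\omega_2,j,0}\cdots s_{\omega_k,j,0}\,|L|\;=\;\frac{N^k\,\gamma_{j,0}^{\,k}\,|L|}{|K|},
\end{equation*}
for whichever branch $j\in\{1,2,3\}$ is in force (in branches $j=2,3$ we additionally use the sign of $L$ assumed there). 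Dividing by $N^r$ gives $N(r)/N^r \ge \gamma_{j,0}^{\,r}|L|/|K|$, which tends to $\infty$ because $\gamma_{j,0}>1$.

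With both hypotheses of Theorem \ref{912thm111} verified, the conclusion
\begin{equation*}
1+\log_N(\gamma_0)\;\le\;\underline{\dim}_B(G_{f^*})\;\le\;\overline{\dim}_B(G_{f^*})\;\le\;1+\log_N(\gamma)
\end{equation*}
follows directly. The only real obstacle is the limit condition in (b); everything else is essentially bookkeeping. That obstacle is resolved cleanly by noticing that Lemma \ref{1701l1} already produces a lower bound of exactly the form $N^k\gamma_{j,0}^{\,k}$ (a factor of $N^k$ better than $N^r$), so the geometric growth of $\gamma_{j,0}^{\,r}$ with $\gamma_{j,0}>1$ supplies the required divergence without any further effort.
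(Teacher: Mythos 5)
Your proposal is correct and follows essentially the same route as the paper: both verify the two hypotheses of Theorem \ref{912thm111} with $\eta=1$ by noting $\gamma_0\ge\gamma_{j,0}>1$ and by using Lemma \ref{1701l1} to obtain $N(k)\ge N^k\gamma_{j,0}^{k}|L|/|K|$, which gives the required divergence of $N(r)/N^r$. No substantive differences.
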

\begin{proof} From assumption, we get 
	$s_{i},q_{i}\in \mathcal{C}^{\eta}(K)$ with $\eta=1$ for $i \in \mathcal{N}$.\\
	Let us suppose that $\gamma_{1,0}>1$.\\	
	From Lemma  \ref{1701l1}, for $k\in \mathbb{N},$ we get   	
	\begin{equation}
		\nonumber	{N}(k)=\underset{\omega\in \mathcal{N}^k}{\sum}{N}(k,\omega)\geq \underset{\omega\in \mathcal{N}^k}{\sum} \frac{N^{k}}{|K|}s_{\omega_1,1,0}s_{\omega_2,1,0}\dots s_{\omega_k,1,0}|L|=\frac{N^{k}\gamma_{1,0}^k |L|}{|K|}.
	\end{equation}
	Since $\gamma_{1,0}>1$, we get
	$$\underset{k\to \infty}{\lim}\frac{N(k)}{N^k}=\infty.$$
	Since  $\gamma_0\geq \gamma_{1,0}>1$, by using Theorem \ref{912thm111}, we conclude 
	$$1+\log_N (\gamma_0) \leq \underline{\dim}_B\left(G_{f^*}\right)\leq \overline{\dim}_B\left(G_{f^*}\right)\leq 1+\log_N (\gamma).$$
	In a similar way, we can  prove the  cases.
\end{proof}
\begin{example}\label{0102ex2}
	Let us consider the data set $\left\{(x_0,0),(x_1,1/2),(x_2,1/3),(x_3,0)\right\}$ and the signature $\epsilon_1=(0,0,0)$, where $0=x_0<x_1<x_2<x_3=1$.\\
	Let  $g_{i}:[0,1]\times \mathbb{R} \to  \mathbb{R},i\in \{1,2,3\}$   given by 
	\begin{align*}
		g_{1}(x,z)=\frac{x^{\eta_1}}{2}+\frac{f(x)z}{4},
		g_{2}(x,z)=\frac{-x^{\eta_2}}{6}+\frac{z}{2}+\frac{1}{2},
		g_{3}(x,z)=\frac{-x^{\eta_3}}{3}+\frac{3z}{4}+\frac{1}{3},
	\end{align*}
	for $(x,z)\in [0,1]\times \mathbb{R}$, where $f(x)=\sin(x)$ for $x\in [0,1]$ or $f  (x)=1$ for $x\in [0,1],\eta_1\in (0,1]$ and $\eta_2,\eta_3\in [1,\infty)$.\\
	We have:\\
	$-$~ 	$s_i,q_i\in \mathcal{C}^{\eta}([0,1])$ for $i\in \{1,2,3\}$, where $\eta=\min\{1,\eta_1\}$;\vspace{1.5mm}\\ 
	$-$~$\gamma=\frac{3}{2},\gamma_{2,1}=\frac{5}{4}$ if $f\equiv\sin$ and $\gamma_{2,1}=\gamma$ if   $f\equiv1$;\vspace{1.5mm}\\
	$-$~$\gamma\leq\frac{3}{\Lambda^{\eta}}$ if $\eta\leq \log_{\Lambda}2$ and $\gamma>\frac{3}{\Lambda^{\eta}}$ if $\eta> \log_{\Lambda}2$.\vspace{1.5mm}\\
	$\mathit{Case}~(i).$ Consider $x_1=4/15$ and $x_2=3/5$, then we get
	$\Lambda_0=\frac{15}{4}$, $\Lambda=\frac{5}{2}$, $x_1=\left(1-\frac{4}{9}\right)x_0+\frac{4}{9}x_2$ and $L=p(x_1)-\left(\left(1-\frac{4}{9}\right)p(x_0)+\frac{4}{9}p(x_2)\right)>0$.\vspace{1.5mm}\\			
	Based on Theorem \ref{912thm1} and \ref{2301t1}, we get
	\begin{align*}	\underline{\dim}_B\left(G_{f^*}\right) \geq 	\begin{cases}			1+\log_{\frac{15}{4}}\left(\frac{5}{4}\right),&\text{if~} f\equiv\sin,\\			1+\log_{\frac{15}{4}}\left(\frac{3}{2}\right),&\text{if~} 	f\equiv1		\end{cases}	\end{align*}	
	and
	\begin{align*}
		\overline{\dim}_B\left(G_{f^*}\right)\leq \begin{cases}
			1-\eta+\log_{\frac{5}{2}}3, &\text{if~} 	\eta\leq \log_{2.5}2,\\
			1+\log_{\frac{5}{2}} \left(\frac{3}{2}\right), &\text{otherwise}.
		\end{cases}
	\end{align*}	
	$\mathit{Case}~(ii).$ Consider $x_1=1/3,x_2=2/3,f\equiv 1$ and $\eta_1>\log_32$, then we get
	$\Lambda_0=\Lambda=3,\gamma_{2,1}=\gamma$,~$x_1=\left(1-\frac{1}{2}\right)x_0+\frac{1}{2}x_2$ and $L=p(x_1)-\left(\left(1-\frac{1}{2}\right)p(x_0)+\frac{1}{2}p(x_2)\right)>0$.\vspace{1.5mm}\\		
	From Corollary \ref{3001cor1}, we get
	$$\dim_B(G_{f^*})=1+\frac{\log\left(1.5\right)}{\log 3}.$$
	
	\begin{figure}
		\centering
		\includegraphics[scale=.55]{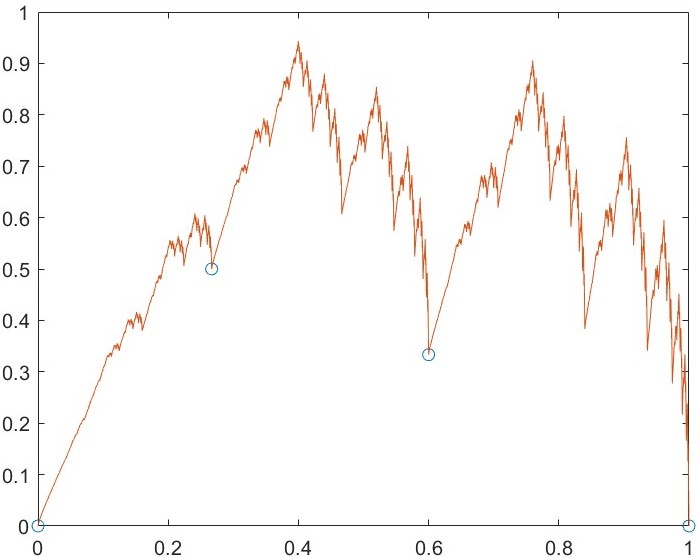}
		\caption{Graph of FIF for  $f\equiv \sin$ with $(\eta_1,\eta_2,\eta_3)=(0.8,2,1)$ derived from the\\ Case $(i)$ of Example \ref{0102ex2}  and we get $1.1688\leq \underline{\dim}_B\left(G_{f^*}\right)\leq  \overline{\dim}_B\left(G_{f^*}\right)\leq 1.44251$.}\label{fig1}
	\end{figure}
	\begin{figure}
		\centering
		\includegraphics[scale=.55]{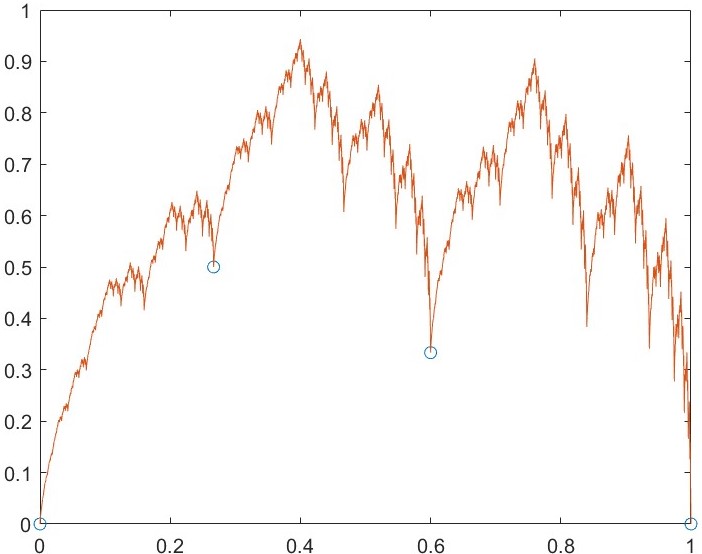}
		\caption{Graph of FIF for  $f\equiv 1$ with $(\eta_1,\eta_2,\eta_3)=(0.8,2,1)$ derived from the\\ Case $(i)$ of Example \ref{0102ex2}  and we get $1.3067\leq \underline{\dim}_B\left(G_{f^*}\right)\leq \overline{\dim}_B\left(G_{f^*}\right)\leq 1.44251$.}\label{fig1}
	\end{figure}
\end{example}

\section{Conclusion}
According to the research done so far on the dimensional analysis of FIFs, FIFs on specific domains have been taken and their box dimensions have been analyzed. It is important to note that each domain so far considered can be written as the attractor of some suitable IFS. In this paper, we have analyzed the box dimension of FIF with its domain considered as an attractor of an arbitrary IFS. 
Hence, most of the FIFs constructed so far will become particular cases of our theory. Also, studies have shown that  the domain of any new FIF can be likely be obtained  as an attractor of a suitable IFS. Hence, in the future, if any new FIF is considered in a different domain, our dimensional results can be applied to such FIFs as well. Hence, the theory on which we have worked will act as a single platform for the study of the dimensional analysis of a large class of FIFs.


\end{document}